\newcommand{\ignore}[1]{}
\theoremstyle{plain}
\newtheorem{definition}{Definition}[section]
\newtheorem{theorem}[definition]{Theorem}
\newtheorem{proposition}[definition]{Proposition}
\newtheorem{lemma}[definition]{Lemma}
\newtheorem{remark}[definition]{Remark}
\numberwithin{equation}{section}
\def\dis{\displaystyle}
\DeclareMathOperator*{\supp}{supp}
\def\R{\mathbb{R}}
\def\Rd{\mathbb{R}^d}
\def\N{\mathbb{N}}
\def\norma#1#2{\|#1\|_{\lower 4pt \hbox{$ \scriptstyle #2$ }}}
\newcommand{\li}{\mathcal L}
\newcommand{\dd}{\tfrac{\mathrm{d}}{\mathrm{d}t}}
\newcommand{\di}{\mathrm{d}}
\newcommand{\uo}{\mathfrak{u}} 
\newcommand{\ro}{\mathfrak{r}} 
\newcommand{\xo}{\mathbf{x}} 
\newcommand{\uv}{\bm{u}} 
\newcommand{\xv}{\bm{x}} 
\newcommand{\rv}{\bm{r}} 
\newcommand{\muo}{\pmb{\mu}} 
\newcommand{\nuo}{\pmb{\nu}} 
\newcommand{\rhoo}{\pmb{\rho}} 
\newcommand{\sigo}{\pmb{\sigma}}
\newcommand{\K}{\mathcal{K}}
\newcommand{\cP}{\mathcal{P}}
\newcommand{\wvo}{\mathbf{w}} 
\newcommand{\PCR}{\mathcal{P}_c(\mathbb{R}^d)}
\newcommand{\PunoR}{\mathcal P_1(\mathbb{R}^d)}
\newcommand{\de}{\mathrm{d}}
\newcommand{\HI}{$\bm{(\mathrm{HI}}\bm{)}$}
\newcommand{\Hv}{$\bm{(\mathrm{H}}v\bm{)}$}
\newcommand{\Hh}{$\bm{(\mathrm{H}}h\bm{)}$}
\newcommand{\HL}{$\bm{(\mathrm{H}}L\bm{)}$}
\newcommand{\Hphi}{$\bm{(\mathrm{H}}\phi\bm{)}$}
\DeclareMathOperator*{\argmax}{argmax}
\author[S. Almi]{Stefano Almi}
\address[Stefano Almi]{Dipartimento di Matematica e Applicazioni ``R.~Caccioppoli'', Universit\`a di Napoli Federico II, via Cintia, 80126 Napoli,
Italy}\email{stefano.almi@unina.it}
\author[R. Durastanti]{Riccardo Durastanti}
\address[Riccardo Durastanti]{Dipartimento di Matematica e Applicazioni ``R.~Caccioppoli'', Universit\`a di Napoli Federico II, via Cintia, 80126 Napoli,
Italy}\email{riccardo.durastanti@unina.it}
\author[F. Solombrino]{Francesco Solombrino}
\address[Francesco Solombrino]{Dipartimento di Scienze e Tecnologie Biologiche ed Ambientali, Università del Salento, via Lecce-Monteroni, 73047 Lecce, Italy} \email{francesco.solombrino@unisalento.it}
\keywords{Mean-Field Optimal Control, Pontryagin Maximum Principle, Agent-based systems, Low-regularity of controls, replicator dynamics}
\subjclass[2020]{30L99, 49J20, 49K20, 49Q22, 58E30, 35Q93, 49N80, 93A16}
\begin{document}
\title[Optimality condition under low regularity of controls]{Mean field first order optimality condition under low regularity of controls}

\begin{abstract}
We show that mean field optimal controls satisfy a first order optimality condition (at a.e.~time) without any a priori requirement on their spatial regularity. This principle is obtained by a careful limit procedure of the Pontryagin maximum principle for finite particle systems. In particular, our result applies to the case of mean field selective optimal control problems for multipopulation and replicator dynamics.
\end{abstract}

\maketitle
\tableofcontents

\section{Introduction}
\label{s:intro}

The Pontryagin Maximum Principle (PMP) has been a cornerstone in the optimal control theory, providing necessary conditions for the optimality of control trajectories in single-agent systems (see, e.g.,~\cite{Bressan}). It converts an (integral) optimal control problem into a pointwise maximization of an Hamiltonian function. The latter also drives the forward-backward flow of optimal trajectories in the product space of states and co-states, respectively. As the complexity of systems increases, particularly in the context of multiagent systems, both the control problem and the traditional PMP framework become less tractable. In recent years, the Mean Field Pontryagin Maximum Principle (MF-PMP) has emerged as a powerful tool for addressing optimal control in large-scale systems, characterized by a large number of interacting agents. We refer to~\cite{ADS, BFRS, BonFor, BonRos, PMPWassConst, BonFra, Burger1, Burger2} for a (non-exhaustive) list of references on the topic.

The mean field optimal control problem one aims to solve is usually formulated as
\begin{align}
\label{e:controlpb-intro}
\min_{\boldsymbol{w}} \,  \int_{0}^{T} \int_{ \R^{d} } L( \mu_{t} ) \, \di \mu_{t} \, \di t + \int_{0}^{T} \int_{ \R^{d} } \phi (\boldsymbol{w} (t, x)) \, \di \mu_{t} \, \di t\,,
\end{align}
subject to the following continuity equation
\begin{align}
\label{e:continuity-intro}
\partial_{t} \mu_{t} + {\rm div} \big( ( \boldsymbol{v} + \boldsymbol{w} ) \mu_{t} \big) = 0\,,
\end{align}
describing the evolution of the agents' distribution $\mu_{t} \in \mathcal{P} (\R^{d})$, driven by a velocity field $\boldsymbol{v}$ and the additional control drift $\boldsymbol{w}$. This can be seen as a limit control problem as the number of agents tends to infinity. A rigorous derivation of~\eqref{e:controlpb-intro}--\eqref{e:continuity-intro} from a finite-particle control problem was obtained via $\Gamma$-convergence in~\cite{CLOS, FLOS, AAMS}. In all these contributions, the superposition principle (cf.~\cite{AGS, AFMS, Amb-Tre, Smirnov, Lisini}) is a key {\em trait-d'union} between the discrete and the continuous state equation, as the competitors in the limit problem are recovered as a suitable limit (in a measure theoretical sense) of discrete controls. As no {\em a priori} regularity constraints are imposed for the finite-particle problem, mean-field optimal controls may not enjoy any continuity property with respect to the space variable~$x$. It is also well-known (see \cite{FLOS}) that in general one must expect the optimal controls obtained by this procedure to be indeed closed-loop.

When coming to the derivation of first-order optimality conditions for problem~\eqref{e:controlpb-intro}--\eqref{e:continuity-intro}, the possible lack of regularity creates a gap with the available results provided by recent literature~\cite{BonRos, BonFra, Burger1}. In such contributions, as it will also happen in our paper, optimality conditions are formulated by coupling:
{\em (i)} a maximality condition on the Hamiltonian function in the space of admissible controls to be satisfied at (almost) every time $t$; {\em (ii)} a forward-backward flow in the product space of states and co-states, usually seen as probability measures on the phase space. 

Such formulation requires the development of local differentiability notions in Wasserstein spaces (see~\cite{BonFra, BonRos, ADS}). The proofs furthermore make use of an infinite-dimensional generalization of the classical {\em needle-variation argument}, which strongly relies on a Cauchy-Lipschitz theory for continuity equations of the form~\eqref{e:continuity-intro}. Indeed, while the equation~\eqref{e:continuity-intro} makes sense whenever the velocity~$\boldsymbol{v}$ and control~$\boldsymbol{w}$ are measurable and satisfy some integrability bounds (cf.~\cite[Chapter~8]{AGS}), uniqueness and stability properties are the outcome of additional regularity properties on the control drift field. Furthermore, the formulation of the adjoint equation for the case of closed-loop controls features spatial derivatives of the control field. Therefore, such formulations are in general not feasible for mean-field optimal control problems. On the one hand, the aim of a control law designed for the kinetic model is to provide a strategy which can be in turn applied -- either exactly or approximately -- to the corresponding finite-dimensional systems. On the other hand, the MF-PMP requires additional regularity constraints, which are rather artificial for optimal controls, restricting their range of applicability. A remarkable result in this sense is the one in~\cite{LipReg}, where uniform Lipschitz bounds along sequences of approximations by empirical measures are obtained at the price of some reasonable, though rather strong assumptions on the Lagrangian and the control cost of the problem.

The aim of the present paper is instead to recover a first-order necessary condition which runs in parallel to the variational limit procedure from the discretized to the kinetic problem,  taking into account the possible lack of regularity of minimizers in the continuum setting. Such condition is obtained as limit of a discretized PMP, under essentially the same assumptions that guarantee the variational convergence to a mean-field control problem, in the sense of \cite{CLOS, FLOS, AAMS}. This generalizes a similar point of view taken in \cite{BFRS}, where, however, a discrete subset of leaders was fixed from the beginning, and the control laws were open-loop and only acting on the leaders' population. The condition we recover is similar to the one obtained via needle variation, as it involves the pointwise maximization of an Hamiltonian functional coupled to an evolution equation for state and co-state variables. The main difference lays in the absence of spatial derivatives of the optimal control field, which always appear in the needle variation approach. Similar to~\cite{BFRS}, the derivation of optimality conditions relies on some continuity properties of Wasserstein differentials with respect to the convergence at hand. A crucial result of our analysis, which allows us to overcome the discrete setting of~\cite{BFRS}, is a representation result for the limit of the discrete control measures associated to the minimizers and the corresponding trajectories. We namely show (see Lemma \ref{lemma3}) that the Radon-Nykodim derivative of the limit control density (denoted by $\rhoo_t$ in the statement) with respect to the density $\nuo_t$ in the product space of positions~$x$ and co-states~$r$ is independent of $r$ and agrees with the optimal control~$w(t,x)$. The proof makes use of the Disintegration theorem, of a semicontinuity result for superlinear convex functionals on measures which was also instrumental to the results in \cite{FLOS} and \cite{AAMS}, and only requires strict convexity of the control cost $\phi$ and fair differentiability assumptions on the Lagrangian cost~$L$.

The main result of our paper (cf.~Theorem \ref{mainres}) also encompasses some useful additional features in modeling. For instance, as in \cite{AAMS}, we can allow the policy maker for a {\it selective} type of control in a system of the type
\begin{displaymath}
\partial_{t} \mu_{t} + {\rm div} \big( ( \boldsymbol{v} + h \boldsymbol{w} ) \mu_{t} \big) = 0\,,
\end{displaymath}
where $h\geq0$ is a non-negative activation function selecting the set of agents targeted by the decision of the policy maker, depending on their state and, possibly, on the global state of the system. Furthermore, as we show in Section \ref{s:convexcase}, the results can be extended to multi-populations setting with time-evolving labels according to a distribution $\lambda \in \mathcal{P}(U)$ which may account, for instance, for a different degree of influence of the single agent, as in~\cite{MS2020, AAMS, AFMS, Mor1, BFS, During, Toscani}. For this, one needs to consider suitable notions of differentiability for functions defined on convex subsets of Banach spaces and on measure defined on these convex subsets, introduced in \cite{AFMS} and \cite{ADS} respectively.

\section{Preliminaries and notation}
\label{s:preliminaries}

We consider a separable Radon metric space $(X, d)$. When $X=\Rd$ ($d\geq 1$) we adopt the distance induced by the Euclidean norm $|\cdot|$. For $R>0$ we define $B_R(x): = \{ \tilde x\in X : d(\tilde x,x)\leq R \}$. In the Euclidean setting we define $B_R:=B_R(0)$ and by $\langle \cdot, \cdot\rangle$ we denote the Euclidean scalar product. For a vector $\bm{x}^N\in (\R^d)^N$ we indicate with $x_i\in \R^d$ its $i$-th component (for initial data $\mathbf{x}^N_0\in (\R^d)^N$ we use $\mathbf{x}^N_{0,i}\in \R^d$ respectively). For a vector $v\in \R^d$ we indicate with $v^i\in \R$ its $i$-th component and for any pair of vectors $v,w\in \R^d$ we represent with $v\otimes w\in \R^{d\times d}$ the matrix with component $v^i w^j$ at the $i$-th row and $j$-th column for every $i,j=1,\dots,d$. We denote by $\mathcal{L}$ the Lebesgue measure on~$\R$. \\
We denote by $\mathcal{M}(X)$ the space of Borel measures with bounded total variation and by $\mathcal{P}(X)$ the family of all Borel probability measures on~$X$. For $p \geq 1$ we
further consider
$$
\mathcal{P}_p(X):=\left\{\mu \in \mathcal{P}(X) : \int_X d(x,\bar{x})^p \di\mu(x)<+\infty \text{ for some }\bar{x}\in X  \right\}
$$
and $\mathcal{P}_c(X)$ the subset of $\mathcal{P}(X)$ of measures with compact support in $X$ recalling that the support is the closed set
$$
\supp(\mu)=\{x\in X : \mu(V)>0 \text{ for each neighborhood }V\text{ of }x  \}.
$$
If $X$ is contained in some Banach space $Z$, we define the $p$ momentum of $\mu\in\mathcal{P}(X)$ as
$$
m_p(\mu):=\left(\int_X \|x\|_Z^p \,\di \mu(x)\right)^\frac1p \qquad \mbox{for }p\geq 1.
$$
Let $X_1$ and $X_2$ be separable Radon metric spaces, we define for every $\mu_1\in \mathcal{P}(X_1)$ and $\mu_2\in\mathcal{P}(X_2)$ the transport plans
with marginals $\mu_1$ and $\mu_2$
$$
\Gamma(\mu_1,\mu_2):=\left\{{\bm\gamma}\in \mathcal{P}(X_1\times X_2) : \pi_{\#}^{i}{\bm\gamma}=\mu_i \text{ for }i=1,2 \right\},
$$
where $\pi^i \colon X_1\times X_2 \to X_i$ is the projection on $X_i$ and $\pi_{\#}^{i}{\bm\gamma}\in \mathcal{P}(X_i)$ is the pushforward of
${\bm\gamma}$ through $\pi^i$. Note that $\Gamma(\mu_1,\mu_2)$ is a non-empty and compact subset of $\mathcal{P}(X_1\times X_2)$ (see Remark 5.2.3 of \cite{AGS}).
We define the $p$-Wasserstein distance between two probability measures $\mu_1$ and $\mu_2$ in $\mathcal{P}_p(X)$ by
$$
W_p^p(\mu_1,\mu_2)= \min\left\{\int_{X\times X}d(x_1,x_2)^p \di{\bm\gamma}(x_1,x_2) : {\bm\gamma}\in \Gamma(\mu_1,\mu_2)\right\}\,.
$$
It follows from \cite[ Proposition 7.15]{AGS} that $\mathcal{P}_p(X)$ endowed with the $p$-Wasserstein distance is a separable metric space which is
complete if $X$ is complete.
We define
$$
\mathcal{P}_c(X):=\left\{\mu \in \mathcal{P}(X) : \supp(\mu) \text{ is compact in }X \right\}
$$
and
$$
\mathcal{P}_b(X):=\left\{\mu \in \mathcal{P}(X) : \supp(\mu) \text{ is bounded in }X \right\}.
$$
From now on, unless otherwise specified, when we write $\mathcal{P}_p(X)$, $\mathcal{P}_c(X)$ and $\mathcal{P}_b(X)$ we mean the separable metric spaces $\left(\mathcal{P}_p(X), W_p\right)$, $\left(\mathcal{P}_c(X), W_1\right)$ and $\left(\mathcal{P}_b(X), W_1\right)$ respectively. Moreover, in $\mathcal{P}_1(X)$, we consider the subset $\mathcal{P}^N(X)$ of discrete measures
$$
\mathcal{P}^N(X):=\left\{\mu\in\mathcal{P}_1(X) : \exists (x_1,\dots,x_N)\in X^N \text{ such that } \mu=\frac{1}{N}\sum_{i=1}^N \delta_{x_i} \right\} \subset \mathcal{P}_b(X).
$$
We define the \emph{R-fattening} of the support of a measure $\mu\in\mathcal{P}_c(X)$ as
$$
B_R^\mu:= \displaystyle\bigcup_{x\in\supp(\mu)} B_{R}(x).
$$
Note that, since $\mu$ has compact support, if $\eta\in\mathcal{P}(B_R^\mu)$ then $\eta \in \mathcal{P}_b(X)$. Moreover, if $X$ is a proper space (for instance a compact space or an Euclidean space), then $\mathcal{P}_b(X)$ coincides with $\mathcal{P}_c(X)$. \\

In view of the applications of our main result to the replicator dynamics (see Section \ref{s:convexcase}), we give the definition of differentiability with respect to a probability measure in the context of convex subspaces of Banach spaces. 
From now on let $(E,\|\cdot\|_E)$ be a separable Banach space, let $C$ be a closed and convex subset of $E$ and let $E_C$ be the topological closure of the vector subspace $\{\alpha(c_1-c_2) : \alpha\in \R, c_1,c_2\in C\}$. Let $A\colon \mathcal{P}_1(C)\to Y$ be such that $\mathcal{P}_b(C) \subseteq D(A):=\{\psi\in\mathcal{P}_1(C) : \|A(\psi)\|_Y<+\infty\}$, with $(Y,\|\cdot\|_Y)$ a Banach space. We introduce a definition of (strong) local differentiability of $A$ at $\mu\in \mathcal{P}_c(C)$ (see \cite[Definition 4.2]{ADS} and  \cite[Section 2]{BPLL}).

\begin{definition}
\label{Wmudiff}
A functional $A \colon \mathcal{P}_{1} (C) \to Y$ is (strongly) locally differentiable at $\mu\in \mathcal{P}_c(C)$ if there exists a map $\nabla_\psi A(\mu)\in L^2_{\mu}(C; \li(E_C;Y))$ in Bochner sense such that for every $R>0$ and for every $\nu\in \mathcal{P}(B^\mu_R)$ it holds 
$$
A(\nu)-A(\mu)=\int_{C\times C} \nabla_\psi A(\mu)(c_1)[c_2-c_1] \di{\bm\gamma}(c_1,c_2) + o_R(W_{2,{\bm\gamma}}(\mu,\nu))
$$
for any ${\bm\gamma}\in \Gamma(\mu,\nu)$, where  $\lim_{t \searrow 0} \, \frac{o_{R} (t) }{t} = 0$ and
\begin{align*}
W_{2,{\bm\gamma}}^2(\mu,\nu):=\int_{C\times C} \|c_1-c_2\|_E^2 \di{\bm\gamma}(c_1,c_2)\,,\\
\end{align*}
\end{definition}

In what follows we deal with systems of $N$ agents. Such agents can be identified by a vector $(x_1,\dots, x_N)\in X^N$. In particular, up to permutation, every $N$-tuple $\bm{x}=(x_1,\dots,x_N)$ can be represented with $\psi^N=\frac{1}{N}\sum_{i=1}^N \delta_{x_i}\in \mathcal{P}^N(X)$. This is the meaning whenever we say that the vector $\bm{x}$ has a generated measure $\psi^N$. We give the following useful definition.

\begin{definition}
\label{sym-def}
Let $F\colon X\times X^N \to Y$ with $X,Y$ metric spaces. We say that $F$ is symmetric if $F(x,\sigma(\bm x))= F(x,\bm x)$ for every $x\in X$, every $\bm x\in X^N$ and every permutation $\sigma: X^N \to X^N$.
\end{definition}

It follows that, if $F:X\times X^N \to Y$ is symmetric, we can uniquely identify $F(x, \bm x)$ with $F(x, \psi^N)$ (setting $F(x,\psi^N):=F(x, \bm x)$) and, as a consequence, we can consider $F$ defined on $X \times \mathcal{P}^N(X)$. On the other hand, if we have a map $F:X \times \mathcal{P}_1(X)\to Y$ we can always consider the restriction $F_{| X^N}: X\times X^N \to Y$ (setting $F_{| X^N}(x, \xv):= F(x,\psi^N)$) which for simplicity of notation we denote again with $F$ (except in the next lemma). Therefore this restriction is symmetric in the sense of Definition \ref{sym-def}. We will use this identification throughout the paper. \\
In the case $X=\R^d$ and $Y=\R^m$ ($m\geq 1$), we give a useful lemma which clarifies as the above identification links the strong local differential of a map $F$ at $\psi^N$ with the classical notion of differentiability at a point $\bm x\in (\Rd)^N$ (a related result is contained in \cite[Proposition 4]{LipReg}).
\begin{lemma}
\label{grad-identi}
Let $F:\R^d \times \mathcal{P}_1(\R^d)\to \R^m$ be locally differentiable at $\psi^N$ in the sense of Definition \ref{Wmudiff}, where $\psi^N$ is generated by $\xv$ (i.e. $\psi^N=\frac{1}{N}\sum_{i=1}^N \delta_{x_i}$). Then $F_{|(\Rd)^N }\colon \Rd\times (\Rd)^N \to \R^m$ is differentiable at $\bm x$ and it holds that
\begin{equation}
\label{ide1}
\nabla_{\psi}F(x,\psi^N)(x_i) = N\nabla_{x_i}F_{| (\Rd)^N}(x,\bm x) = \sum_{h=1}^N \nabla_{x_h}F_{|(\Rd)^N }(x,\bm x) \qquad \text{for every } i=1,\dots, N.
\end{equation}
Moreover, if, for every fixed $x\in \R^d$, the map $\mathcal{P}_c(\R^d) \times \R^d \ni (\psi,\tilde x) \mapsto \nabla_\psi F(x,\psi)(\tilde x)\in \R^{d\times d}$ is continuous, then $F_{| (\Rd)^N}(x,\cdot) \in C^1((\Rd)^N;\R^m)$. 
\end{lemma}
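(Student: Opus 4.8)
The plan is to test the strong local differentiability of $F(x,\cdot)$ at $\psi^N$ against the explicit competitors obtained by displacing the atoms of $\psi^N$, and to read off from the resulting first-order expansion the classical differentiability of $F_{|(\Rd)^N}(x,\cdot)$ at $\bm x$. Fix $x\in\Rd$ and abbreviate $g(\bm y):=F_{|(\Rd)^N}(x,\bm y)=F\big(x,\tfrac1N\sum_h\delta_{y_h}\big)$. For $\bm y=(y_1,\dots,y_N)$ with $\max_h|y_h|\le R$ the measure $\nu_{\bm y}:=\tfrac1N\sum_h\delta_{x_h+y_h}$ satisfies $\nu_{\bm y}\in\mathcal P(B_R^{\psi^N})$, so Definition \ref{Wmudiff} applies; testing it with the diagonal plan $\bm\gamma_{\bm y}:=\tfrac1N\sum_h\delta_{(x_h,x_h+y_h)}\in\Gamma(\psi^N,\nu_{\bm y})$, for which $W_{2,\bm\gamma_{\bm y}}(\psi^N,\nu_{\bm y})=|\bm y|/\sqrt N$, yields
\[
g(\bm x+\bm y)-g(\bm x)=\frac1N\sum_{h=1}^N\nabla_\psi F(x,\psi^N)(x_h)[y_h]+o_R\big(|\bm y|/\sqrt N\big).
\]
Since Definition \ref{Wmudiff} furnishes, for each fixed $R$, a single remainder $o_R$ valid for all such competitors at once, the error is genuinely $o(|\bm y|)$ (not merely directional), so the right-hand side is linear in $\bm y$ up to $o(|\bm y|)$. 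This is precisely the (Fréchet) differentiability of $g$ at $\bm x$, with $\nabla_{x_h}g(\bm x)=\tfrac1N\nabla_\psi F(x,\psi^N)(x_h)$; choosing $y_i=te$ and $y_h=0$ for $h\neq i$ then isolates the first identity $N\nabla_{x_i}F_{|(\Rd)^N}(x,\bm x)=\nabla_\psi F(x,\psi^N)(x_i)$ of \eqref{ide1}.

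It remains to match this with the sum $\sum_h\nabla_{x_h}F_{|(\Rd)^N}(x,\bm x)$. By the previous step this amounts to showing that $\nabla_\psi F(x,\psi^N)$ takes the same value at every atom $x_h$, equivalently that all blocks $\nabla_{x_h}g(\bm x)$ coincide. Here I would exploit the permutation invariance of $g$ recorded after Definition \ref{sym-def}: since $g(\sigma\bm y)=g(\bm y)$ for every permutation $\sigma$, differentiation gives the equivariance $\nabla_{x_i}g(\bm y)=\nabla_{x_j}g(\sigma\bm y)$ for the transposition $\sigma=(i\,j)$, and I would combine this with the fact that $g$ depends on $\bm y$ only through the empirical measure $\tfrac1N\sum_h\delta_{y_h}$ to transfer the relation back to the single configuration $\bm x$ and rewrite $N\nabla_{x_i}g(\bm x)=\sum_h\nabla_{x_h}g(\bm x)$. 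This reorganization is the step I expect to be the main obstacle: the naive equivariance only links the gradient at $\bm x$ with its values at the \emph{reshuffled} configurations $\sigma\bm x$, so closing the identity genuinely requires the measure-theoretic structure behind the identification $F_{|(\Rd)^N}(x,\bm x)=F(x,\psi^N)$ rather than symmetry alone.

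Finally, for the $C^1$ regularity I would use that $\bm y\mapsto\tfrac1N\sum_h\delta_{y_h}$ is continuous from $(\Rd)^N$ into $(\mathcal P_c(\Rd),W_1)$ (it is Lipschitz, $W_1(\Psi(\bm y),\Psi(\bm y'))\le\tfrac1N\sum_h|y_h-y'_h|$) and that each $\bm y\mapsto y_h$ is continuous. Hence, under the assumed continuity of $(\psi,\tilde x)\mapsto\nabla_\psi F(x,\psi)(\tilde x)$, the partials $\nabla_{x_h}g(\bm y)=\tfrac1N\nabla_\psi F\big(x,\tfrac1N\sum_k\delta_{y_k}\big)(y_h)$ obtained in the first paragraph are continuous functions of $\bm y$, and therefore $F_{|(\Rd)^N}(x,\cdot)\in C^1((\Rd)^N;\R^m)$. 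Throughout, the only points requiring care beyond the second-equality obstacle are keeping the perturbed atoms inside the admissible fattening $B_R^{\psi^N}$ (which holds for $|\bm y|\le R$) and handling possible coincidences among the $x_h$, for which the $L^2_{\psi^N}$-nature of $\nabla_\psi F(x,\psi^N)$ makes the preceding identities insensitive to the choice of representative.
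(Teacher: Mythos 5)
Your first and third paragraphs reproduce, essentially verbatim, the paper's own argument: perturb the atoms while keeping them inside $B_R^{\psi^N}$, test Definition \ref{Wmudiff} against the diagonal plan (for which $W_{2,\bm\gamma}(\psi^N,\nu_{\bm y})=|\bm y|/\sqrt N$), read off Fr\'echet differentiability of $F_{|(\Rd)^N}(x,\cdot)$ together with $\nabla_{x_i}F_{|(\Rd)^N}(x,\bm x)=\tfrac1N\nabla_\psi F(x,\psi^N)(x_i)$, and deduce the $C^1$ statement from the continuity of $(\psi,\tilde x)\mapsto\nabla_\psi F(x,\psi)(\tilde x)$ composed with the $W_1$-Lipschitz map $\bm y\mapsto\tfrac1N\sum_h\delta_{y_h}$. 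Your observation that the remainder $o_R$ is a single function valid for all competitors, so that the expansion is genuinely Fr\'echet and not merely directional, is left implicit in the paper; it is correct and worth stating.

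The step at which you stall, the second equality in \eqref{ide1}, is not a gap that a cleverer use of the ``measure-theoretic structure'' will close. As you correctly note, symmetry of $g:=F_{|(\Rd)^N}(x,\cdot)$ only yields $\nabla_{x_i}g(\bm x)=\nabla_{x_{\sigma^{-1}(i)}}g(\sigma\bm x)$, which relates gradients at \emph{different} configurations. By the first identity, the claimed equality $N\nabla_{x_i}g(\bm x)=\sum_{h}\nabla_{x_h}g(\bm x)$ is equivalent to $\nabla_\psi F(x,\psi^N)(x_i)=\int\nabla_\psi F(x,\psi^N)(\tilde x)\,\di\psi^N(\tilde x)$ for every $i$, i.e.\ to $\nabla_\psi F(x,\psi^N)$ being constant on the atoms of $\psi^N$. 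This already fails for $F(\psi)=\int|y|^2\,\di\psi(y)$ (an admissible Lagrangian under \HL), for which $\nabla_\psi F(\psi^N)(x_i)=2\langle x_i,\cdot\rangle$ while $\sum_h\nabla_{x_h}F_{|(\Rd)^N}(\bm x)=\tfrac2N\sum_h\langle x_h,\cdot\rangle$; these differ unless every $x_i$ equals the barycenter. The paper's own proof dismisses this equality with the one-line claim that it is ``a direct consequence of the fact that $F_{|(\Rd)^N}$ is symmetric'', which is exactly the non-argument you rightly distrust. So you should not attempt to complete this step: prove the first identity as you did, note that only the first identity is invoked later (in Lemma \ref{lemma1} and in passing from \eqref{sys-field-N} to \eqref{eq-N}), and record that the second equality is false as stated unless one additionally assumes that $\nabla_\psi F(x,\psi^N)$ is constant on $\supp(\psi^N)$.
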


\begin{proof}
Let $\psi^N$ and $\tilde \psi^N$ be the empirical measures generated by $\xv =(x_1,\dots,x_N)\in (\R^d)^N$ and $\tilde\xv=(\tilde x_1,\dots,\tilde x_N)\in (\R^d)^N$ respectively. Then, by Definition \ref{Wmudiff} choosing $\bm\gamma = \frac{1}{N}\sum_{j=1}^N\delta_{(x_j,\tilde x_j)} \in \Gamma(\psi^N, \tilde\psi^N)$, and by the identification of $F(x,\psi^N)$ with a symmetric $F_{|(\Rd)^N}(x,\xv)$ (see Definition \ref{sym-def} and the related comment), we have
\begin{eqnarray*}
&&F_{|(\Rd)^N}(x,\tilde \xv) - F_{|(\Rd)^N}(x,\xv) =  F(x,\tilde \psi^N)-F(x,\psi^N) \nonumber \\
&&=\int_{\R^{2d}} \nabla_{\psi}F(x,\psi^N)(x')[x''-x'] \di\bm\gamma(x',x'') + o_R\left(\left(\int_{\R^{2d}} |x'-x''|^2 \di{\bm\gamma}(x',x'')\right)^{\frac{1}{2}}\right)\nonumber \\
&&= \frac{1}{N} \sum_{j=1}^N \nabla_{\psi}F(x,\psi^N)(x_j)[\tilde x_j-x_j] + \frac{1}{\sqrt N}o\left(\|\xv -\tilde\xv\|_{(\R^d)^N}\right),
\end{eqnarray*}
for $R>0$ fixed and sufficiently large. It follows from the previous equalities and by definition of differentiability in the Euclidean setting that $F_{|(\Rd)^N}$ is differentiable at $\bm x$ and that 
$$
\nabla_{x_i}F_{|(\Rd)^N}(x,\bm x) = \frac{1}{N}\nabla_{\psi}F(x,\psi^N)(x_i) \qquad \forall i=1,\dots,N,
$$
which gives the first equality in \eqref{ide1}. The second equality of \eqref{ide1} is a direct consequence of the fact that $F_{|(\Rd)^N}$ is symmetric (see Definition \ref{sym-def}). \\
Finally, using again that 
$$
W_{2,\bm\gamma}(\psi^N,\tilde\psi^N)= \frac{1}{\sqrt N}\|\xv -\tilde\xv\|_{(\R^d)^N},
$$
and by the identification \eqref{ide1} we deduce the last result of the lemma on the continuous differentiability of $F_{|(\Rd)^N}$. 
\end{proof}

Now we recall a result contained in \cite{FLOS} that will be used in what follows. Let $\phi \colon \R^d\to [0,+\infty]$ be a proper, lower semicontinuous, convex function superlinear at infinity and such that $\phi(0)=0$, let $\psi\in\mathcal{M}^+(X)$ be a reference measure and $\mu\in\mathcal{M}(X;\R^d)$ be a vector measure. We define the following functional:
\begin{equation}
\label{def-flos}
\Phi(\mu|\psi):=\int_{X} \phi(\omega(x))\di\psi(x) \quad \text{if }\mu=\omega\psi \ll \psi, \quad \Phi(\mu|\psi):=+\infty \quad \text{if }\mu \not\ll\psi.
\end{equation}
Then the following hold.
\begin{lemma}[{\cite[Theorem 2.6]{FLOS}}]
\label{flos1}
Suppose that we have two sequences $\psi^n\in\mathcal{M}^+(X)$, $\mu^n\in\mathcal{M}(X;\R^d)$ narrowly converging to $\psi\in\mathcal{M}^+(X)$ and $\mu\in\mathcal{M}(X;\R^d)$, respectively. Then 
$$
\liminf_{n\to +\infty} \Phi(\mu^n|\psi^n) \geq \Phi(\mu|\psi).
$$
In particular, if $\liminf_{n\to +\infty} \Phi(\mu^n|\psi^n)< +\infty$, we have $\mu\ll \psi$.
\end{lemma}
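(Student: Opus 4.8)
The plan is to prove the inequality through a convex-duality representation of the functional $\Phi(\cdot|\cdot)$ and then to exploit the linearity of the dual functionals under narrow convergence. I denote by $\phi^*(p):=\sup_{v\in\R^d}\{\langle p,v\rangle-\phi(v)\}$ the Legendre--Fenchel conjugate of $\phi$. Since $\phi$ is proper, convex, lower semicontinuous and superlinear at infinity, $\phi^*$ is finite (hence, being convex, continuous) on the whole of $\R^d$, and Fenchel--Young duality gives $\phi(v)=\sup_{p\in\R^d}\{\langle p,v\rangle-\phi^*(p)\}$ for every $v$. The superlinearity of $\phi$ is precisely the property that will later force the functional to blow up on the part of $\mu$ that is singular with respect to $\psi$.

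The first and central step is to establish the dual representation
$$
\Phi(\mu|\psi)=\sup_{p\in C_b(X;\R^d)}\left\{\int_X \langle p(x),\di\mu(x)\rangle-\int_X \phi^*(p(x))\,\di\psi(x)\right\},
$$
valid for every $\psi\in\mathcal{M}^+(X)$ and $\mu\in\mathcal{M}(X;\R^d)$, including the value $+\infty$. The inequality ``$\geq$'' is the elementary direction: when $\mu\not\ll\psi$ the left-hand side is $+\infty$ and the claim is void, whereas when $\mu=\omega\psi$ the pointwise Fenchel--Young inequality $\langle p(x),\omega(x)\rangle-\phi^*(p(x))\leq\phi(\omega(x))$, integrated against $\psi$, shows that every dual competitor is dominated by $\Phi(\mu|\psi)$. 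The converse inequality ``$\leq$'' is the delicate one: for $\mu\ll\psi$ I would take $p$ to be a bounded continuous approximation of a measurable selection of the subdifferential $\partial\phi(\omega(x))$, passing to the limit by a Lusin-type argument together with monotone/dominated convergence and using the finiteness of $\phi^*$ to control $\int_X\phi^*(p)\,\di\psi$; for $\mu\not\ll\psi$ I would decompose $\mu=\mu_a+\mu_s$ into its absolutely continuous and singular parts relative to $\psi$ and test with fields $p$ that are large and suitably aligned with $\mu_s$ on a set carrying $\mu_s$ but $\psi$-negligible, letting the dual value diverge.

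Once the representation is in place, the semicontinuity is immediate. I fix an arbitrary $p\in C_b(X;\R^d)$; since $\phi^*$ is continuous and $p$ is bounded, both $p$ and $\phi^*\circ p$ are bounded and continuous, so the narrow convergences $\mu^n\to\mu$ and $\psi^n\to\psi$ yield
$$
\lim_{n\to+\infty}\left\{\int_X\langle p,\di\mu^n\rangle-\int_X\phi^*(p)\,\di\psi^n\right\}=\int_X\langle p,\di\mu\rangle-\int_X\phi^*(p)\,\di\psi.
$$
Combining this with the ``$\geq$'' inequality $\Phi(\mu^n|\psi^n)\geq\int_X\langle p,\di\mu^n\rangle-\int_X\phi^*(p)\,\di\psi^n$, valid for every $n$, I obtain $\liminf_{n}\Phi(\mu^n|\psi^n)\geq\int_X\langle p,\di\mu\rangle-\int_X\phi^*(p)\,\di\psi$; taking the supremum over $p\in C_b(X;\R^d)$ and invoking the representation on the right-hand side gives $\liminf_{n}\Phi(\mu^n|\psi^n)\geq\Phi(\mu|\psi)$. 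The final assertion follows by contraposition: if the liminf is finite then $\Phi(\mu|\psi)<+\infty$, which by definition of $\Phi$ forces $\mu\ll\psi$.

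The main obstacle is the ``$\leq$'' half of the dual representation, and within it the singular case, where one must produce bounded continuous test fields whose pairing with $\mu_s$ is arbitrarily large while keeping $\int_X\phi^*(p)\,\di\psi$ bounded. The finiteness of $\phi^*$ (equivalently, the superlinearity of $\phi$) and the boundedness of the masses $\psi^n(X)$, a free consequence of narrow convergence obtained by testing with the constant $1\in C_b(X)$, are the two ingredients that make this construction work.
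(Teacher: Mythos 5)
The paper gives no proof of Lemma~\ref{flos1}: it is imported verbatim from \cite[Theorem 2.6]{FLOS}, whose argument is precisely the convex-duality representation you describe (write $\Phi(\cdot|\cdot)$ as a supremum of functionals that are linear and narrowly continuous in the pair $(\mu,\psi)$, then pass to the liminf term by term). Your outline is correct and follows essentially the same route as the cited source; the only point requiring real care is the ``$\leq$'' half of the representation, and there your two key observations --- that $\phi^*$ is finite and continuous by superlinearity, and that $\phi^*(0)=-\inf\phi=0$ lets you switch the test field off outside a small-$\psi$-measure neighbourhood of the set carrying the singular part --- are exactly what makes the standard construction go through.
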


Finally, for $C$ closed and convex in a separable Banach space $E$, we say that $\psi\in C([0,T];\mathcal{P}_1(C))$ is a solution to a continuity equation 
$$
\begin{cases}
\dis \dd \psi_t = -\mathrm{div}_{x}\left(V(x,\psi_t)\psi_t\right) & \text{ in }(0,T],  \\
\psi_0=\hat{\psi}_0,
\end{cases}
$$
with $V\colon C\times \mathcal{P}_1(C)\to E$ if for every $\varphi\in C_c^\infty([0,T)\times E)$ and for every $t\in [0,T]$ it holds
$$
\int_E \varphi(t,x)\di\psi_t(x) - \int_E \varphi(0,x)\di\hat{\psi}_0(x) = \int_0^t \int_E \Big(\partial_t \varphi(\tau,x) + \langle \nabla_x \varphi(\tau,x), V(x,\psi_\tau) \rangle_{E^*\times E} \Big)\di\psi_\tau(x)\di\tau,
$$
where $\nabla_x$ is the Fr\'echet differential and the integrals are defined in Bochner sense.

\section{Assumptions and main results}
\label{s:mainres}

Throughout the work we assume the following on controls and initial data of the optimal control problems we will study.

\begin{tcolorbox}
$$\bm{(\mathrm{HI}}\bm{)}$$ Let $K$ be a compact and convex subset of $\Rd$ of admissible controls such that $0\in K$ and $\K:=L^1([0,T];K)$. Let $\hat{\Psi}_0\in \mathcal{P}_c(\R^d)$. Let $\xo_0^N \in (\R^d)^N$ and $\Psi_0^N:=\frac{1}{N}\sum_{i=1}^N \delta_{\xo_{0,i}^N}\in \mathcal{P}^N(\R^d)$ be such that $\supp(\Psi_0^N)\subseteq B_1^{\hat{\Psi}_0}$ for every $N\in \N$ and $\Psi_0^N \rightharpoonup \hat{\Psi}_0$ weakly* in the sense of measures as $N\to +\infty$.
\end{tcolorbox}

Note that such an approximation for $\hat{\Psi}_0$ is always possible, see, e.g., \cite[Section 3]{MS2020}. \\

For every $N\in \N$ we introduce the following particle optimal control problem: 
\begin{equation}
\label{costoN}
\min_{\bm u^N\in\K^N}\left\{\mathcal{F}_N^{\xo_0^N}(\bm x^N,\bm u^N):= \int_0^T L(\psi_t^N) \di t + \int_0^T \frac{1}{N}\sum_{i=1}^N \phi(u_i(t))\di t \right\}
\end{equation}
where $\bm x^N\in AC([0,T]; (\Rd)^N)$ is the solution to
\begin{equation}
\label{systN}
\begin{cases}
\dis \dd x_i(t)= v(x_i(t),\psi_t^N) + h(x_i(t),\psi_t^N)u_i(t) & \text{ in }(0,T],  \\
x_i (0)=\xo_{0,i}^N,
\end{cases}
\quad\text{for }i=1,\dots, N,
\end{equation}
and $\psi_t^N:= \frac{1}{N}\sum_{i=1}^N \delta_{x_i(t)}$,  thus identifying $\bm x^N\in AC([0,T]; (\Rd)^N)$ with $\psi^N\in AC([0,T];\mathcal{P}^N(\Rd))$. \\
We assume the following for the components $v$ and $h$ of the velocity field.
\begin{tcolorbox}
$$\bm{(\mathrm{H}}v\bm{)}$$ Let $v:\R^d \times \mathcal{P}_1(\R^d) \to \R^d$ be such that:
\begin{itemize}
\item[$(v_1)$] for every $R>0$ there exists $L_{v,R}>0$ such that for every $x_1,x_2\in B_R$ and every $\psi_1,\psi_2\in \mathcal{P}(B_R)$
$$
|v(x_1,\psi_1)-v(x_2,\psi_2)| \leq L_{v,R} \left(|x_1-x_2| + W_1(\psi_1,\psi_2)\right);
$$
\item[$(v_2)$] there exists $M_v>0$ such that for every $x\in\R^d$ and every $\psi\in \mathcal{P}_1(\R^d)$
$$
|v(x,\psi)|\leq M_v\left(1+|x|+m_1(\psi)\right);
$$
\item[$(v_3)$] for every $\psi\in \mathcal{P}_1(\R^d)$ the map $x\mapsto v(x,\psi)$ is differentiable with differential $\nabla_x v(x,\psi)\in\R^{d\times d}$ and the map $\R^d\times \mathcal{P}_1(\R^d) \ni (x,\psi)\mapsto \nabla_x v(x,\psi)\in \R^{d\times d}$ is continuous;
\item[$(v_4)$] for every $x\in \R^d$ the map $\psi\mapsto v(x,\psi)$ is locally differentiable w.r.t. Definition \ref{Wmudiff} with differential $\nabla_\psi v(x,\psi)$ and the map $\R^d \times \mathcal{P}_c(\R^d) \times \R^d \ni (x,\psi,\tilde x) \mapsto \nabla_\psi v(x,\psi)(\tilde x)\in \R^{d\times d}$ is continuous.
\end{itemize}
\end{tcolorbox}

\begin{tcolorbox}
$$\bm{(\mathrm{H}}h\bm{)}$$ Let $h:\R^d \times \mathcal{P}_1(\R^d) \to \R$ be such that:
\begin{itemize}
\item[$(h_1)$] $h$ is bounded uniformly with respect to $\psi\in \mathcal{P}_1(\R^d)$;
\item[$(h_2)$] for every $R>0$ there exists $L_{h,R}>0$ such that for every $x_1,x_2\in B_R$ and every $\psi_1,\psi_2\in \mathcal{P}(B_R)$
$$
|h(x_1,\psi_1)-h(x_2,\psi_2)| \leq L_{h,R} \left(|x_1-x_2| + W_1(\psi_1,\psi_2)\right);
$$
\item[$(h_3)$] for every $\psi\in \mathcal{P}_1(\R^d)$ the map $x\mapsto h(x,\psi)$ is differentiable with differential $\nabla_x h(x,\psi)\in\R^{d}$
   and the map $\R^d\times \mathcal{P}_1(\R^d) \ni (x,\psi)\mapsto \nabla_x h(x,\psi)\in \R^d$ is continuous;
\item[$(h_4)$] for every $x\in \R^d$ the map $\psi\mapsto h(x,\psi)$ is locally differentiable w.r.t. Definition \ref{Wmudiff} with differential $\nabla_\psi h(x,\psi)$ and the map $\R^d \times \mathcal{P}_c(\R^d) \times \R^d \ni (x,\psi,\tilde x) \mapsto \nabla_\psi h(x,\psi)(\tilde x)\in \R^d$ is continuous.
\end{itemize}
\end{tcolorbox}

For the cost functions $L$ and $\phi$ the following assumptions hold.

\begin{tcolorbox}
$$\bm{(\mathrm{H}}L\bm{)}$$ Let $L:\mathcal{P}_1(\R^d) \to [0,+\infty)$ be such that:
\begin{itemize}
\item[$(L_1)$] for every $R>0$ there exists $L_{L,R}>0$ such that for every $\psi_1,\psi_2\in \mathcal{P}(B_R)$
$$
|L(\psi_1)-L(\psi_2)| \leq L_{L,R} W_1(\psi_1,\psi_2);
$$
\item[$(L_2)$] $L$ is locally differentiable w.r.t. Definition \ref{Wmudiff} with differential $\nabla_\psi L(\psi)$ and the map $\mathcal{P}_c(\R^d) \times \R^d \ni (\psi,\tilde x) \mapsto \nabla_\psi L(\psi)(\tilde x)\in \R^d$ is continuous.
\end{itemize}
\end{tcolorbox}

\begin{tcolorbox}
$$\bm{(\mathrm{H}}\phi\bm{)}$$ Let $\phi:\R^d \to [0,+\infty)$ be strictly convex with $\phi(0)=0$.
\end{tcolorbox}

Under assumptions \HI, \Hv-$(v_1,v_2)$, \Hh-$(h_1,h_2)$, \HL-$(L_1)$ and \Hphi, by~\cite[Proposition 2]{AAMS}, there exists an optimal trajectory-control pair $(\xo^N,\uo^N)\in AC([0,T];(\Rd)^N) \times \K^N$ for \eqref{costoN}-\eqref{systN}. We define the generated pairs
\begin{equation}
\label{def-psi}
\Psi_t^N:= \frac{1}{N}\sum_{i=1}^N \delta_{\xo_i(t)}\in\PunoR, \qquad  \Psi^N:=\Psi_t^N \otimes \mathcal{L}_{|[0,T]}\in C([0,T];\mathcal{P}_1(\R^d)),
\end{equation}
and
\begin{equation}
\label{def-muo}
\muo_t^N:= \frac{1}{N}\sum_{i=1}^N \uo_i(t)\delta_{\xo_i(t)}\in \mathcal{M}(\R^d; \R^d), \qquad \muo^N:=\muo_t^N \otimes \mathcal{L}_{|[0,T]}\in \mathcal{M}([0,T]\times \R^d; \R^d).
\end{equation}

In addition, by \cite[Lemma 1 and Proposition 2]{AAMS} (which are an adaptation of \cite[Lemma 6.2]{FLOS}), we know the behavior of $\Phi$ (defined by \eqref{def-flos}) when it is evaluated on empirical measures and, in particular, on $(\Psi_t^N,\muo_t^N)$. More precisely, we have the following result.
\begin{lemma}
\label{flos2}
Assume $\bm{(\mathrm{H}}\phi\bm{)}$. Let $(\xv^N,\uv^N)\in AC([0,T];(\R^d)^N)\times \K^N$, and let $(\psi^N,\mu^N)\in AC([0,T];\mathcal{P}^N(\R^d)) \times \mathcal{M}([0,T]\times \R^d;\R^d)$ be the pair generated by $(\xv^N,\uv^N)$. Then, for a.e. $t\in[0,T]$ we have
$$
\frac{1}{N}\sum_{i=1}^N \phi(u_i(t)) \geq \Phi(\mu^N_t|\psi^N_t).
$$
Moreover, it holds for a.e. $t\in [0,T]$ that
$$
\frac{1}{N}\sum_{i=1}^N \phi(\uo_i(t)) = \Phi(\muo^N_t|\Psi^N_t),
$$
where $(\Psi_t^N,\muo_t^N)$ are defined by \eqref{def-psi} and \eqref{def-muo} respectively.
\end{lemma}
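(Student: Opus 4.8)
The plan is to prove the two assertions by different mechanisms: the inequality is valid for \emph{every} admissible pair and rests solely on the convexity of $\phi$, while the equality at the optimal pair genuinely exploits optimality together with the strict convexity in \Hphi.

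For the inequality I would fix $t$ (any Lebesgue point at which all the $u_i$ are defined) and make the Radon--Nikodym density of $\mu_t^N$ with respect to $\psi_t^N$ explicit. Grouping the indices by the distinct occupied points, set $I_x:=\{i:x_i(t)=x\}$ and $k_x:=\#I_x$ for $x\in\supp(\psi_t^N)$, so that $\psi_t^N(\{x\})=k_x/N$ and $\mu_t^N(\{x\})=\frac1N\sum_{i\in I_x}u_i(t)$. As $\psi_t^N$ charges every occupied point, $\mu_t^N\ll\psi_t^N$ with density $\omega_t(x)=\frac1{k_x}\sum_{i\in I_x}u_i(t)$, and by \eqref{def-flos}
$$
\Phi(\mu_t^N\,|\,\psi_t^N)=\sum_{x\in\supp(\psi_t^N)}\phi\big(\omega_t(x)\big)\,\frac{k_x}{N}.
$$
Applying Jensen's inequality to each average $\omega_t(x)$ and summing yields $\Phi(\mu_t^N\,|\,\psi_t^N)\le\frac1N\sum_{i=1}^N\phi(u_i(t))$, which is the first claim; no optimality is used.

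For the equality at the optimal pair $(\xo^N,\uo^N)$ the same computation shows that equality holds at $t$ precisely when Jensen is tight, i.e.\ (by strict convexity of $\phi$) when all controls $\uo_i(t)$, $i\in I_x$, coincide at every multiply occupied point $x$. It therefore suffices to prove that $\bigcup_{i\ne j}(S_{ij}\cap D_{ij})$ is Lebesgue-negligible, where $S_{ij}:=\{t:\xo_i(t)=\xo_j(t)\}$ and $D_{ij}:=\{t:\uo_i(t)\ne\uo_j(t)\}$; since the union is finite I fix a pair $i\ne j$. On $S_{ij}$ the absolutely continuous curves $\xo_i,\xo_j$ agree, hence $\dot\xo_i=\dot\xo_j$ for a.e.\ $t\in S_{ij}$ (the derivative of the $AC$ map $\xo_i-\xo_j$ vanishes a.e.\ on its zero set $S_{ij}$), and \eqref{systN} gives $h(\xo_i(t),\Psi_t^N)\big(\uo_i(t)-\uo_j(t)\big)=0$ a.e.\ on $S_{ij}$. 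At the times where $h(\xo_i(t),\Psi_t^N)\ne0$ this already forces $\uo_i(t)=\uo_j(t)$.

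The remaining, and in my view principal, difficulty is the degenerate set $Z_i:=\{t:h(\xo_i(t),\Psi_t^N)=0\}$ (which for these coincidence times equals $Z_j$), where the state equation carries no information and optimality must enter. I would argue by contradiction: if $\uo_i\ne0$ on a set $P\subseteq Z_i$ of positive measure, then replacing $\uo_i$ by $0$ on $P$ and keeping every other control leaves $\xo^N$ a solution of \eqref{systN} — the perturbation multiplies $h(\xo_i,\Psi_t^N)\equiv0$ on $P$, so no trajectory and hence neither $\Psi^N$ nor the running cost $\int_0^T L(\Psi_t^N)\,\di t$ change — while the control cost strictly decreases, because $\phi(0)=0<\phi(\uo_i(t))$ on $P$; here strict convexity together with $\phi\ge0=\phi(0)$ makes $0$ the unique minimizer of $\phi$, so $\phi$ is positive off the origin. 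This contradicts the optimality of $(\xo^N,\uo^N)$, whence $\uo_i=0$ a.e.\ on $Z_i$ and, symmetrically, $\uo_j=0$ a.e.\ on $Z_j$; thus $\uo_i=\uo_j$ a.e.\ on $S_{ij}\cap Z_i$ as well. Combining the two regimes shows $S_{ij}\cap D_{ij}$ is negligible, proving the equality for a.e.\ $t$.
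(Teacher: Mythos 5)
Your proposal is correct. A caveat on the comparison: the paper does not prove this lemma at all --- it imports it verbatim from \cite[Lemma~1 and Proposition~2]{AAMS} (an adaptation of \cite[Lemma~6.2]{FLOS}) --- so there is no in-paper argument to match against. Your first part (computing the Radon--Nikodym density $\omega_t(x)=\tfrac{1}{k_x}\sum_{i\in I_x}u_i(t)$ at multiply occupied points and applying Jensen) is exactly the standard mechanism behind the cited results. Your second part is also sound and is the natural way to obtain the equality: strict convexity reduces it to showing that coinciding particles carry coinciding controls a.e., the a.e.\ vanishing of $\tfrac{\di}{\di t}(\xo_i-\xo_j)$ on the coincidence set $S_{ij}$ handles the times where $h\neq 0$, and the zero-replacement competitor handles the degenerate set $Z_i$ (your observation that strict convexity with $\phi\geq 0=\phi(0)$ makes $0$ the \emph{unique} zero of $\phi$ is the right way to get strict cost decrease). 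One step you should make explicit: when you replace $\uo_i$ by $0$ on $P\subseteq Z_i$, you verify that $\xo^N$ remains \emph{a} solution of \eqref{systN} with the modified control, but the cost comparison requires that it is \emph{the} trajectory generated by that control; this follows from uniqueness for \eqref{systN} under \Hv-$(v_1,v_2)$ and \Hh-$(h_1,h_2)$ (the well-posedness implicitly used throughout the paper via the ``generated pair'' terminology), and it is worth saying so. Note also that the full set of standing assumptions, not just \Hphi, is silently in force for the second assertion, since it concerns the optimal pair.
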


The limit as $N\to +\infty$ for the optimal control problem \eqref{costoN}-\eqref{systN} is established in \cite[Corollary 1]{AAMS} (see also \cite[Theorem 3.3]{FLOS}). In particular, the following result is proved which we rewrite in a way more suitable for our aim.
\begin{proposition}
\label{vecchio}
Assume \HI, \Hv-$(v_1,v_2)$, \Hh-$(h_1,h_2)$, \HL-$(L_1)$ and \Hphi. Then for every optimal trajectory-control pair $(\xo^N,\uo^N)\in AC([0,T];(\Rd)^N) \times \K^N$ for \eqref{costoN}-\eqref{systN} with generated pairs $(\Psi^N,\muo^N)$ there exists $(\Psi,\muo)$ such that, up to subsequence, $\Psi^N \to \Psi$ in $C([0,T];\PunoR)$ and $\muo^N \stackrel{*}\rightharpoonup \muo$ in $\mathcal{M}([0,T]\times \R^d;\R^d)$ with $\muo=\wvo\Psi$ for some $\wvo\in L^1_{\Psi}([0,T]\times \R^d; K)$. Moreover the pair $(\Psi,\wvo)$ is a solution to the optimal control problem
\begin{equation}
\label{costo}
\min_{\omega\in L^1_{\psi}([0,T]\times \R^d; K)}\left\{\mathcal{F}^{\hat{\Psi}_0}(\psi,\omega):= \int_0^T L(\psi_t) \di t + \int_0^T \int_{\R^d} \phi(\omega(t,x))\di \psi_t(x) \di t \right\}
\end{equation}
where $\psi \in C([0,T]; \PunoR)$ is a distributional solution to
\begin{equation}
\label{syst}
\begin{cases}
\dis \dd \psi_t = -\mathrm{div}_{x}\left((v(x,\psi_t) + h(x,\psi_t)\omega(t,x))\psi_t\right) & \text{ in }(0,T],  \\
\psi_0=\hat{\Psi}_0.
\end{cases}
\end{equation}
Finally, it holds that
\begin{equation}
\label{gamma-conv}
\mathcal{F}^{\hat{\Psi}_0}(\Psi,\wvo) = \lim_{N\to +\infty} \mathcal{F}_N^{\xo_0^N}(\xo^N,\uo^N).
\end{equation}
\end{proposition}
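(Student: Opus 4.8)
The plan is to prove the statement as a compactness-plus-$\Gamma$-convergence result, using the semicontinuity of Lemma \ref{flos1} together with the exact identity of Lemma \ref{flos2}, and closing the optimality via a recovery sequence built from the superposition principle. \emph{Compactness.} Since $K$ is compact each $\uo_i$ takes values in a fixed bounded set; together with the linear growth $(v_2)$ and the bound $(h_1)$ on $h$, a Gronwall estimate on \eqref{systN} produces $R>0$, independent of $N$ and $t$, with $\supp(\Psi_t^N)\subseteq B_R$ for all $t\in[0,T]$. The same bounds give $|\dot{\xo}_i(t)|\le C$ uniformly, so the curves $t\mapsto\Psi_t^N$ are equi-Lipschitz in $W_1$; as $\mathcal{P}(B_R)$ is $W_1$-compact, Ascoli–Arzelà yields $\Psi^N\to\Psi$ in $C([0,T];\PunoR)$ along a subsequence. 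The $\muo^N$ have total variation bounded by $T\sup_{u\in K}|u|$ and are supported in $[0,T]\times B_R$, hence weak-$*$ sequential compactness of bounded Radon measures gives $\muo^N\stackrel{*}\rightharpoonup\muo$ along a further subsequence.

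\emph{Structure of the limit.} By Lemma \ref{flos2} and Fubini, $\Phi(\muo^N|\Psi^N)=\int_0^T\frac1N\sum_i\phi(\uo_i(t))\,\di t$, which is uniformly bounded because $\phi$ is finite (hence bounded) on the compact set $K$. Since all the supports lie in the fixed compact set $[0,T]\times B_R$, the convergences $\Psi^N\to\Psi$ and $\muo^N\stackrel{*}\rightharpoonup\muo$ are narrow on the separable Radon space $[0,T]\times\R^d$, so Lemma \ref{flos1} gives $\Phi(\muo|\Psi)\le\liminf_N\Phi(\muo^N|\Psi^N)<+\infty$ and therefore $\muo\ll\Psi$, i.e.\ $\muo=\wvo\Psi$ for a Borel density $\wvo$. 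That $\wvo(t,x)\in K$ for $\Psi$-a.e.\ $(t,x)$ follows from the convexity and closedness of $K$, since $\muo$ is a weak-$*$ limit of measures whose densities along the atoms take values in $K$.

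\emph{Limit continuity equation.} I would test \eqref{systN} against $\varphi\in C_c^\infty([0,T)\times\R^d)$, obtaining
$$\dd\int \varphi \,\di\Psi_t^N = \int \langle \nabla_x\varphi,\, v(\cdot,\Psi_t^N)\rangle \,\di\Psi_t^N + \int \langle \nabla_x\varphi,\, h(\cdot,\Psi_t^N)\rangle\,\di\muo_t^N,$$
and then pass to the limit. The uniform convergence $\Psi^N\to\Psi$ with the local Lipschitz bounds $(v_1)$, $(h_2)$ forces $v(\cdot,\Psi_t^N)\to v(\cdot,\Psi_t)$ and $h(\cdot,\Psi_t^N)\to h(\cdot,\Psi_t)$ uniformly on $B_R$; combined with $\Psi^N\to\Psi$ and $\muo^N\stackrel{*}\rightharpoonup\wvo\Psi$ this shows that $(\Psi,\wvo)$ solves \eqref{syst} distributionally, with datum $\hat\Psi_0$ by \HI. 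For the cost, the Lagrangian part converges, $\int_0^T L(\Psi_t^N)\,\di t\to\int_0^T L(\Psi_t)\,\di t$, by $(L_1)$, uniform convergence and dominated convergence, while Step two gives $\int_0^T\!\int\phi(\wvo)\,\di\Psi_t\,\di t=\Phi(\muo|\Psi)\le\liminf_N\Phi(\muo^N|\Psi^N)$; summing yields the $\liminf$ inequality $\mathcal{F}^{\hat\Psi_0}(\Psi,\wvo)\le\liminf_N\mathcal{F}_N^{\xo_0^N}(\xo^N,\uo^N)$.

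\emph{Optimality, energy identity, and main obstacle.} To upgrade the $\liminf$ bound to optimality and to the equality \eqref{gamma-conv}, I would fix an arbitrary competitor $(\psi,\omega)$ of \eqref{costo}--\eqref{syst} and construct a recovery sequence of discrete admissible pairs with $\limsup_N\mathcal{F}_N\le\mathcal{F}^{\hat\Psi_0}(\psi,\omega)$. Since $(\xo^N,\uo^N)$ is optimal for the $N$-particle problem, $\mathcal{F}_N^{\xo_0^N}(\xo^N,\uo^N)$ is dominated by the cost of this recovery sequence, and chaining with the $\liminf$ inequality gives both minimality of $(\Psi,\wvo)$ and \eqref{gamma-conv}. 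The hardest step is precisely this recovery construction: one must lift the Eulerian solution $\psi$ of \eqref{syst} to a Lagrangian representation via the superposition principle, discretize it by finitely many trajectories compatibly with the prescribed data $\Psi_0^N$ of \HI, and check that the induced discrete controls reproduce the control cost in the limit; the superlinearity and strict convexity of $\phi$, and the closedness and convexity of $K$, are what make the control term pass to the limit without loss.
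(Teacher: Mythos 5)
The paper does not actually prove this proposition: it is imported wholesale from \cite[Corollary 1]{AAMS} and \cite[Theorem 3.3]{FLOS}, and the text around it only records the consequences \eqref{gamma-conv2}--\eqref{gamma-conv3}. Your outline reconstructs exactly the strategy of those references, and the first three steps are sound: the Gr\"onwall/Ascoli--Arzel\`a compactness (this is the content of \cite[Proposition 1]{AAMS}, which the paper reuses in Lemma \ref{lemma1}), the identification $\muo=\wvo\Psi$ with $K$-valued density via Lemmas \ref{flos1}--\ref{flos2}, the passage to the limit in the weak formulation of \eqref{systN}, and the $\liminf$ inequality for the cost are all correctly argued and in the right logical order.

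The genuine gap is the step you yourself flag as the hardest one: the recovery sequence. Everything that makes $(\Psi,\wvo)$ a \emph{minimizer} (rather than just an admissible limit with lower-semicontinuous cost) and that upgrades the $\liminf$ in \eqref{gamma-conv} to an equality lives in that construction, and you only describe what it should accomplish. The difficulty is not cosmetic: for a competitor $\omega\in L^1_{\psi}([0,T]\times\R^d;K)$ the equation \eqref{syst} has no Cauchy--Lipschitz theory, so one cannot sample characteristics directly; one must pass through the superposition principle to get a probability measure on continuous curves, sample $N$ curves compatibly with the \emph{prescribed} initial data $\xo_0^N$ of \HI{} (not with an arbitrary empirical approximation of $\hat\Psi_0$), show that the sampled trajectories approximately solve \eqref{systN} with admissible $K$-valued controls, and verify that the convex superlinear control cost converges along the sampling --- this last point is where \cite[Lemma 6.2]{FLOS} and an approximation by more regular competitors enter. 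Since this is precisely the content of \cite[Theorem 3.3]{FLOS} and \cite[Corollary 1]{AAMS}, your proposal is best read as a correct reduction of the proposition to those results rather than a self-contained proof; as a blind proof it is incomplete at exactly the point where the cited theorems do their work.
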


\begin{remark}
Let $\uo^N$, $\Psi$, $\muo$ and $\wvo$ be as in Proposition \ref{vecchio}. Observe that, by \eqref{gamma-conv}, and since $\Psi^N$ converges to $\Psi$ in $C([0,T];\PunoR)$, it follows that 
\begin{equation}
\label{gamma-conv2}
\lim_{N\to +\infty} \int_0^T \frac{1}{N}\sum_{i=1}^N\phi(\uo_i(t))\di t = \int_0^T \int_{\R^d} \phi(\wvo(t,x))\di\Psi_t(x)\di t.
\end{equation}
On the other hand for any subinterval $[t_1,t_2]\subseteq [0,T]$, thanks to \cite[Remark 5.1.6]{AGS} we have that $\muo^N$ is narrowly convergent to $\muo$ in $\mathcal{M}([t_1,t_2]\times \R^d;\R^d)$. Therefore, applying Lemma \ref{flos1} for $X=[t_1,t_2]\times \R^d$ and Lemma \ref{flos2}, it holds 
\begin{eqnarray*}
\int_{t_1}^{t_2} \int_{\R^d} \phi(\wvo(t,x))\di\Psi_t(x)\di t \stackrel{\eqref{def-flos}} = \Phi(\muo|\Psi) \leq \liminf_{N\to+\infty} \Phi(\muo^N|\Psi^N) = \liminf_{N\to +\infty} \int_{t_1}^{t_2} \frac{1}{N}\sum_{i=1}^N\phi(\uo_i(t))\di t.
\end{eqnarray*}
Combining this with \eqref{gamma-conv2}, by standard argument in measure theory, we get 
\begin{eqnarray}
\label{gamma-conv3}
\lim_{N\to +\infty} \int_{t_1}^{t_2} \frac{1}{N}\sum_{i=1}^N\phi(\uo_i(t))\di t = \int_{t_1}^{t_2}\int_{\R^d} \phi(\wvo(t,x))\di\Psi_t(x)\di t.
\end{eqnarray}
\end{remark}

Our aim is to derive a first order optimality condition in the Wasserstein space $\mathcal{P}_1(\R^{2d})$ for the optimal control problem \eqref{costo}-\eqref{syst} which is limit as $N\to +\infty$ of the necessary condition given by the Pontryagin maximum principle for the finite optimal control problem \eqref{costoN}-\eqref{systN}. With this goal in mind, we introduce the rescaled costate variables (or adjoint variables) $\rv^N=(r_1,\dots,r_N)\in (\Rd)^N$. Thanks to the assumptions \Hv-\Hh-\HL, and using Lemma \ref{grad-identi}, we can apply the classical Pontryagin maximum principle to the optimal control problem (see also \cite[Theorem 3.5]{ADS}) obtaining the following result.
\begin{proposition}
\label{vecchio2}
Assume \HI, \Hv, \Hh, \HL \, and \Hphi. Let $(\xo^N,\uo^N)\in AC([0,T];(\Rd)^N) \times \K^N$ be an optimal trajectory-control pair for \eqref{costoN}-\eqref{systN}. Then there exists a costate curve $\ro^N\in AC([0,T];(\Rd)^N)$ such that $(\xo^N,\ro^N,\uo^N)$ is a solution to the system
\begin{equation}
\label{sys-tot-N}
\begin{cases}
\dis\dd \left(\begin{array}{c} \xo_i(t) \\ \ro_i(t) \end{array}\right) =
\left(\begin{array}{c}
N\nabla_{r_i} \mathcal H_N(\xo^N(t),\ro^N(t),\uo^N(t)) \\
-N\nabla_{x_i} \mathcal H_N(\xo^N(t),\ro^N(t),\uo^N(t))
\end{array}\right) & \text{ in } [0,T), \\
\xo_i (0)=\xo_{0,i}^N, \\
\ro_i(T) = 0, \\
\dis \uo^N (t)\in \argmax_{\uv^N\in K^N} \left\{ \mathcal H_N(\xo^N(t),\ro^N(t),\uv^N)\right\} & \text{ a.e. }t\in [0,T],
\end{cases}
\quad \text{for every }i=1,\dots,N,
\end{equation}
where the Hamiltonian $\mathcal H_N\colon  (\R^d)^N\times (\R^d)^N \times K^N \to \R$ is defined by
\begin{equation}
\label{ham-N}
\mathcal H_N(\xv^N,\rv^N,\uv^N):= \frac{1}{N} \sum_{k=1}^N \langle r_k, v(x_k,\psi^N) + h(x_k,\psi^N)u_k \rangle - L(\psi^N) - \frac{1}{N}\sum_{k=1}^N \phi(u_k).
\end{equation}
\end{proposition}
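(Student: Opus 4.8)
The statement is an instance of the classical finite-dimensional Pontryagin Maximum Principle applied to the $Nd$-dimensional control problem \eqref{costoN}--\eqref{systN}. The plan is to: (i) recast \eqref{costoN}--\eqref{systN} as a standard Lagrange problem with free right endpoint whose data are of class $C^1$ in the state variable; (ii) invoke the classical PMP (e.g.\ \cite{Bressan}, see also \cite[Theorem 3.5]{ADS}); and (iii) recover the rescaled formulation \eqref{sys-tot-N}--\eqref{ham-N} through a linear change of the costate. Writing the dynamics in block form $\dd \xo^N = f(\xo^N,\uo^N)$, with $f_i(\xv^N,\uv^N):= v(x_i,\psi^N) + h(x_i,\psi^N)u_i$, and the running cost $\ell(\xv^N,\uv^N):= L(\psi^N) + \tfrac1N\sum_{k=1}^N \phi(u_k)$, where $\psi^N = \tfrac1N\sum_j \delta_{x_j}$ is generated by $\xv^N$, the control set $K^N$ is compact and convex by \HI, and $\phi$ is finite and convex, hence continuous; thus $f$ and $\ell$ are continuous in $(\xv^N,\uv^N)$. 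Since the maximization of the Hamiltonian is carried out pointwise in the control and the needle-variation argument differentiates only in the state, no regularity of $\phi$ beyond continuity is required.

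The crucial point, and the \emph{main technical obstacle}, is the $C^1$-dependence of $f$ and $\ell$ on the state $\xv^N\in(\Rd)^N$. Here I would use Lemma \ref{grad-identi}: for a fixed point argument $y$, the map $\xv^N\mapsto v(y,\psi^N)$ is differentiable on $(\Rd)^N$ with $\partial_{x_j}\big[v(y,\psi^N)\big] = \tfrac1N\nabla_\psi v(y,\psi^N)(x_j)$, and analogously for $h$ and $L$. Composing with the classical spatial differentials of assumptions $(v_3)$ and $(h_3)$ via the chain rule, the maps $\xv^N\mapsto v(x_i,\psi^N)$ and $\xv^N\mapsto h(x_i,\psi^N)$ turn out differentiable, each derivative being the sum of a diagonal self-interaction term $\nabla_x v(x_i,\psi^N)$ (resp.\ $\nabla_x h$) for $j=i$ and of the measure term $\tfrac1N\nabla_\psi v(x_i,\psi^N)(x_j)$ (resp.\ $\tfrac1N\nabla_\psi h$); likewise $\xv^N\mapsto L(\psi^N)$ is differentiable with derivative $\tfrac1N\nabla_\psi L(\psi^N)(x_j)$. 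The continuity of all these partial derivatives---hence the $C^1$ regularity needed for the PMP---follows precisely from the joint continuity assumptions $(v_3)$--$(v_4)$, $(h_3)$--$(h_4)$ and $(L_2)$, through the last assertion of Lemma \ref{grad-identi}. Verifying this carefully, keeping track of the coupling between the moving empirical measure and its own atoms, is the delicate part of the argument.

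With $f,\ell$ of class $C^1$ in the state and the right endpoint free, the classical PMP applies and provides an absolutely continuous costate $\pv^N\in AC([0,T];(\Rd)^N)$ solving $\dd \pv^N = -\nabla_{\xv^N}\mathcal G_N$ with the transversality condition $p_i(T)=0$ and the pointwise maximization over $K^N$ of the classical Hamiltonian $\mathcal G_N(\xv^N,\pv^N,\uv^N):= \langle \pv^N, f(\xv^N,\uv^N)\rangle - \ell(\xv^N,\uv^N)$. Nontriviality of the multiplier together with $p_i(T)=0$ forces the cost multiplier to be nonzero (otherwise the linear costate equation would yield $\pv^N\equiv 0$), so the problem is normal and the multiplier is normalized to $1$; moreover, being affine in $\pv^N$, the costate equation is a linear ODE with bounded coefficients along the fixed optimal trajectory and is therefore globally solvable on $[0,T]$.

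Finally, I set $r_i := N p_i$. A direct inspection of \eqref{ham-N} gives the identity $\mathcal H_N(\xv^N,N\pv^N,\uv^N) = \mathcal G_N(\xv^N,\pv^N,\uv^N)$, valid for all $\xv^N,\pv^N,\uv^N$, since the prefactor $\tfrac1N$ in \eqref{ham-N} exactly cancels the rescaling of the costate. Differentiating this identity with respect to $p_i$ and $x_i$ respectively yields $N\nabla_{r_i}\mathcal H_N = \nabla_{p_i}\mathcal G_N = f_i$, recovering the state equation, and $-N\nabla_{x_i}\mathcal H_N = -N\nabla_{x_i}\mathcal G_N = N\dd p_i = \dd r_i$, recovering the costate equation; the terminal condition becomes $r_i(T)=N p_i(T)=0$, and the maximization set is unchanged because $\mathcal H_N(\xo^N,N\pv^N,\cdot)$ and $\mathcal G_N(\xo^N,\pv^N,\cdot)$ coincide on $K^N$. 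This produces exactly the system \eqref{sys-tot-N} with Hamiltonian \eqref{ham-N}, completing the proof.
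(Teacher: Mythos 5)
Your proposal is correct and follows essentially the same route as the paper, which justifies Proposition \ref{vecchio2} precisely by invoking the classical finite-dimensional PMP after using Lemma \ref{grad-identi} to secure $C^1$ dependence of the data on the state (the resulting derivative structure you describe is exactly the paper's expansion \eqref{sys-field-N}). Your additional details on normality of the multiplier and on the costate rescaling $r_i = Np_i$ are accurate and merely make explicit what the paper leaves implicit.
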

In view of the limit as $N\to +\infty$, it is useful to explicitly write the velocity field of system \eqref{sys-tot-N}. We have 
\begin{eqnarray}
\label{sys-field-N}
&&\left(\begin{array}{c}
\displaystyle N\nabla_{r_i} \mathcal H_N(\xo^N(t),\ro^N(t),\uo^N(t)) \\
\displaystyle -N\nabla_{x_i} \mathcal H_N(\xo^N(t),\ro^N(t),\uo^N(t))
\end{array}\right) \nonumber \\
&&=\left(\begin{array}{c}
\displaystyle  v(\xo_i(t),\Psi_t^N) + h(\xo_i(t),\Psi_t^N)\uo_i(t) \\
\displaystyle -\nabla_x^T v(\xo_i(t),\Psi_t^N)[\ro_i(t)] - \sum_{k=1}^N \nabla_{x_i}^T v(\xo_k(t),\Psi_t^N)[\ro_k(t)] + N \nabla_{x_i} L(\Psi_t^N)
\end{array}\right) \nonumber \\
&&+ \left(\begin{array}{c}
\displaystyle 0 \\
\displaystyle - \left(\nabla_x h(\xo_i(t),\Psi_t^N)\otimes \uo_i(t)\right)[\ro_i(t)] - \left(\sum_{k=1}^N \nabla_{x_i} h(\xo_k(t),\Psi_t^N)\otimes \uo_k(t)\right)[\ro_k(t)]
\end{array}\right) \nonumber \\
&&=\left(\begin{array}{c}
\displaystyle  v(\xo_i(t),\Psi_t^N) + h(\xo_i(t),\Psi_t^N)\uo_i(t) \\
\displaystyle -\nabla_x^T v(\xo_i(t),\Psi_t^N)[\ro_i(t)] - \sum_{k=1}^N \nabla_{x_i}^T v(\xo_k(t),\Psi_t^N)[\ro_k(t)] + N \nabla_{x_i} L(\Psi_t^N)
\end{array}\right) \nonumber \\
&&+ \left(\begin{array}{c}
\displaystyle 0 \\
\displaystyle - \nabla_x h(\xo_i(t),\Psi_t^N) \langle \ro_i(t),\uo_i(t)\rangle - \sum_{k=1}^N \nabla_{x_i} h(\xo_k(t),\Psi_t^N)\langle \ro_k(t), \uo_k(t)\rangle
\end{array}\right).
\end{eqnarray}
Considering the optimal state-costate-control $(\xo^N,\ro^N,\uo^N)\in AC([0,T];(\R^{2d})^N)\times \K^N$, we introduce the generated pairs $(\nuo^N,\rhoo^N)$ defined as
\begin{equation}
\label{def-nuo}
\nuo_t^N:= \frac{1}{N}\sum_{i=1}^N \delta_{(\xo_i(t),\ro_i(t))} \in \mathcal{P}_1(\R^{2d}), \qquad \nuo^N:=\nuo_t^N \otimes \mathcal{L}_{|[0,T]}\in C([0,T];\mathcal{P}_1(\R^{2d})),
\end{equation}
and
\begin{equation}
\label{def-rhoo}
\rhoo_t^N:= \frac{1}{N}\sum_{i=1}^N \uo_i(t)\delta_{(\xo_i(t),\ro_i(t))}\in \mathcal{M}(\R^{2d}; \R^d), \qquad \rhoo^N:=\rhoo_t^N \otimes \mathcal{L}_{|[0,T]}\in \mathcal{M}([0,T]\times \R^{2d}; \R^d).
\end{equation}
It follows from Lemma \ref{grad-identi} and \eqref{sys-field-N} that we can rewrite the equation in system \eqref{sys-tot-N} as
\begin{eqnarray}
\label{eq-N}
&& \dis\dd \left(\begin{array}{c} \xo_i(t) \\ \ro_i(t) \end{array}\right) \nonumber \\
&&=\left(\begin{array}{c}
\displaystyle  v(\xo_i(t),\Psi_t^N) + h(\xo_i(t),\Psi_t^N)\uo_i(t) \\
\displaystyle -\nabla_x^T v(\xo_i(t),\Psi_t^N)[\ro_i(t)] - \int_{\R^{2d}} \nabla^{T}_{\psi}v(\tilde x,\Psi_t^N)(\xo_i(t))[\tilde r] \di\nuo_t^N(\tilde x,\tilde r) + \nabla_{\psi} L(\Psi_t^N)(\xo_i(t))
\end{array}\right) \nonumber \\
&&+ \left(\begin{array}{c}
\displaystyle 0 \\
\displaystyle - \nabla_x h(\xo_i(t),\Psi_t^N) \langle \ro_i(t),\uo_i(t)\rangle - \int_{\R^{2d}} \nabla_{\psi} h(\tilde x,\Psi_t^N)(\xo_i(t))\langle \tilde r, \di\rhoo_t^N(\tilde x,\tilde r)\rangle
\end{array}\right).
\end{eqnarray}

We state our main result, i.e. the limit as $N\to +\infty$ of the necessary conditions for the finite optimal control problem \eqref{costoN}-\eqref{systN} contained in \eqref{sys-tot-N} which leads to necessary conditions for the Wasserstein optimal control problem \eqref{costo}-\eqref{syst}.
\begin{theorem}
\label{mainres}
Assume \HI, \Hv, \Hh, \HL \, and \Hphi. For every optimal trajectory-control pair $(\xo^N,\uo^N)\in AC([0,T];(\Rd)^N) \times \K^N$ for \eqref{costoN}-\eqref{systN}, let $(\Psi,\wvo)\in C([0,T];\PunoR)\times L^1_{\Psi}([0,T]\times \R^d;K)$ be the solution to the optimal control problem \eqref{costo}-\eqref{syst} given by Proposition \ref{vecchio}. Then there exists $\nuo\in \mathrm{Lip}([0,T]; \mathcal{P}_c(\R^{2d}))$ such that, up to subsequence, as $N\to +\infty$
\begin{itemize}
\item[$(a)$] $\nuo^N \to \nuo$ in $C([0,T]; \mathcal{P}_1(\R^{2d}))$ (where $\nuo^N$ is defined by \eqref{def-nuo});
\item[$(b)$] $\rhoo^N \to \wvo\nuo$ in the narrow topology of $\mathcal{M}([0,T]\times \R^{2d}; \R^d)$ (where $\rhoo^N$ is defined by \eqref{def-rhoo}).
\end{itemize}
Moreover $\nuo$ solves in distributional sense
\begin{equation}
\label{syst-tot-inf}
\begin{cases}
\dis\dd \nuo_t = - \mathrm{div}_{(x,r)}\left(\left(\begin{array}{c} \Gamma_1(x,\nuo_t,\wvo(t,x)) \\ \Gamma_2(x,r,\nuo_t,\wvo(t,x))\end{array}\right)\nuo_t\right) & \text{ in }
[0,T), \\
\pi^1_{\#}\nuo_t=\Psi_t & \text{ in } [0,T],\\
\nuo_T=\Psi_T \otimes \delta_{0} \in \mathcal{P}_c(\R^{2d}),
\end{cases}
\end{equation}
where
\begin{eqnarray*}
&& \left(\begin{array}{c} \Gamma_1(x,\nuo_t,\wvo(t,x)) \\ \Gamma_2(x,r,\nuo_t,\wvo(t,x))\end{array}\right) \\
&&= \left(\begin{array}{c} v(x,\pi_{\#}^1\nuo_t) + h(x,\pi_{\#}^1\nuo_t)\wvo(t,x) \\ -\nabla_x^{T}v(x,\pi_{\#}^1\nuo_t)[r] - \displaystyle\int_{\R^{2d}}\nabla_{\psi}^{T} v(\tilde x,\pi_{\#}^1\nuo_t)(x)[\tilde r] \di\nuo_t(\tilde x,\tilde r) + \nabla_{\psi}L(\pi_{\#}^1\nuo_t)(x)
\end{array}\right) \nonumber \\
&&+ \left(\begin{array}{c}
\displaystyle 0 \\
\displaystyle - \nabla_x h(x,\pi_{\#}^1\nuo_t) \langle r,\wvo(t,x)\rangle - \int_{\R^{2d}} \nabla_{\psi} h(\tilde x,\pi_{\#}^1\nuo_t)(x)\langle \tilde r,\wvo(t,\tilde x)\rangle \di\nuo_t(\tilde x,\tilde r)
\end{array}\right).
\end{eqnarray*}
Finally the following maximality condition holds
\begin{equation}
\label{cond-inf}
\displaystyle \wvo(t,\cdot)\in\argmax_{\omega\in L^1_{\Psi_t}(\R^d;K)} \left\{\mathcal{H}(\nuo_t,\omega(x)) \right\} \qquad \text{for a.e. }t\in [0,T],
\end{equation}
where $\mathcal{H}: \mathcal{P}_b(\R^{2d}) \times \mathcal{M}(\R^d;\R^d) \to \R$ is defined by
\begin{align}
\label{ham-inf}
\mathcal{H}(\nu,\omega):= \int_{\R^{2d}} & \langle r, v(x,\pi_{\#}^1\nu) + h(x,\pi_{\#}^1\nu)\omega(x)\rangle \di \nu(x,r) - L(\pi_{\#}^1\nu) 
\\
&
\nonumber- \int_{\R^{2d}} \phi(\omega(x)) \di \nu(x,r) 
\end{align}
if $\omega \in L^1_{\pi_{\#}^1\nu}(\R^d;K)$, and $\mathcal{H}(\nu,\omega) = +\infty$ otherwise.
\end{theorem}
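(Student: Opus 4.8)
The plan is to pass to the limit $N\to+\infty$ in the finite-dimensional Pontryagin system \eqref{sys-tot-N}--\eqref{eq-N}, organizing the work into compactness, identification of the control density, convergence of the flow, and the maximality condition.

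\emph{Compactness, parts $(a)$ and $(b)$.} First I would establish uniform bounds. By \HI\ and the sublinear growth in $(v_2)$, $(h_1)$, the states $\xo_i(t)$ remain in a fixed compact set $Q\subset\R^d$ for all $i,N,t$, so $\Psi^N_t$ is supported in $Q$ and $\pi^1_\#\nuo^N_t=\Psi^N_t$. For the costates I would run a backward Gronwall estimate on \eqref{eq-N}: since $\ro_i(T)=0$ and the coefficient fields $\nabla_x v,\nabla_\psi v,\nabla_x h,\nabla_\psi h,\nabla_\psi L$ are bounded on $Q$ (by the continuity in $(v_3)$--$(v_4)$, $(h_3)$--$(h_4)$, $(L_2)$) while $\uo_i\in K$ is bounded, the quantity $m_N(t):=\frac1N\sum_i|\ro_i(t)|$ obeys a linear differential inequality; integrating backward from $T$ gives $\sup_{i,N,t}|\ro_i(t)|\le C$. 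Hence $\nuo^N_t$ is supported in a fixed compact set of $\R^{2d}$, the velocity field in \eqref{eq-N} is uniformly bounded, and $t\mapsto\nuo^N_t$ is uniformly Lipschitz in $W_1$. By Ascoli--Arzel\`a in $C([0,T];\mathcal P_1(\R^{2d}))$ (tightness plus equi-Lipschitz continuity) a subsequence converges to some $\nuo\in\mathrm{Lip}([0,T];\mathcal P_c(\R^{2d}))$, giving $(a)$; the relations $\pi^1_\#\nuo_t=\Psi_t$ and $\nuo_T=\Psi_T\otimes\delta_0$ pass to the limit since $\nuo^N_T=\Psi^N_T\otimes\delta_0$. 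As $\uo_i\in K$ is compact, $\rhoo^N$ has uniformly bounded total variation and fixed compact support, hence converges narrowly along a further subsequence to some $\rhoo$, whose $x$-marginal is $\muo^N\stackrel{*}\rightharpoonup\muo=\wvo\Psi$, so $\pi^1_\#\rhoo_t=\wvo(t,\cdot)\Psi_t$.

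\emph{Identification $\rhoo=\wvo\nuo$ (Lemma \ref{lemma3}).} The key is that the density of $\rhoo_t$ with respect to $\nuo_t$ is independent of $r$ and equals $\wvo(t,x)$. By Lemma \ref{flos2} applied on $\R^{2d}$ we have $\frac1N\sum_i\phi(\uo_i(t))=\Phi(\rhoo^N_t|\nuo^N_t)$; combining the narrow convergences of $\rhoo^N,\nuo^N$ with the lower semicontinuity of $\Phi$ (Lemma \ref{flos1}) and the energy identity \eqref{gamma-conv3} yields $\Phi(\rhoo|\nuo)\le\int_0^T\!\int\phi(\wvo)\,\di\Psi_t\,\di t<\infty$, so $\rhoo_t=\zeta_t\nuo_t$ with $\zeta_t(x,r)\in K$. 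Disintegrating $\nuo_t=\Psi_t\otimes\nuo_t^x$ along $\pi^1$ (with conditionals $\nuo_t^x$ on the $r$-fibers), the marginal relation gives $\int\zeta_t(x,r)\,\di\nuo_t^x(r)=\wvo(t,x)$ for $\Psi_t$-a.e.\ $x$. Jensen's inequality then gives $\int\phi(\zeta_t(x,r))\,\di\nuo_t^x(r)\ge\phi(\wvo(t,x))$; integrating and using $\Phi(\rhoo|\nuo)\le\int_0^T\!\int\phi(\wvo)\,\di\Psi_t\,\di t$ forces equality in Jensen for a.e.\ $(t,x)$, and the strict convexity in \Hphi\ forces $\zeta_t(x,r)=\wvo(t,x)$ for $\nuo_t$-a.e.\ $(x,r)$. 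Thus $\rhoo=\wvo\nuo$, which is $(b)$.

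\emph{Limit flow \eqref{syst-tot-inf}.} I would write \eqref{eq-N} as the weak continuity equation for $\nuo^N$ tested against $\varphi\in C_c^\infty([0,T)\times\R^{2d})$ and pass to the limit term by term. The local terms built from $v$, $\nabla_x^T v[r]$, and $\nabla_\psi L$ converge by continuity of the fields and $\nuo^N\to\nuo$; the control-dependent terms $h\,\uo_i$ and $\nabla_x h\langle\ro_i,\uo_i\rangle$ are linear in $\rhoo^N$ and converge by $(b)$ to the corresponding $\wvo$-terms; the nonlocal integrals $\int\nabla_\psi^T v(\tilde x,\Psi^N_t)(\cdot)[\tilde r]\,\di\nuo^N_t$ and $\int\nabla_\psi h(\tilde x,\Psi^N_t)(\cdot)\langle\tilde r,\di\rhoo^N_t\rangle$ converge by the joint continuity of $\nabla_\psi v,\nabla_\psi h$ on the compact support together with $\nuo^N\to\nuo$ and $\rhoo^N\to\wvo\nuo$. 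Replacing $\Psi^N_t$ by $\pi^1_\#\nuo_t=\Psi_t$ in the coefficients, the limit is exactly the distributional form of \eqref{syst-tot-inf} with the stated marginal and terminal conditions.

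\emph{Maximality \eqref{cond-inf} and the main obstacle.} Since the $u$-dependent part of $\mathcal H_N$ in \eqref{ham-N} decouples over particles, the discrete maximality reads $\uo_i(t)=\argmax_{u\in K}\{h(\xo_i,\Psi^N_t)\langle\ro_i(t),u\rangle-\phi(u)\}$ for a.e.\ $t$. On the continuum side, disintegrating $\nuo_t=\Psi_t\otimes\nuo_t^x$ in \eqref{ham-inf} shows that the $\omega$-part of $\mathcal H(\nuo_t,\omega)$ equals $\int_{\R^d}[h(x,\Psi_t)\langle\bar r_t(x),\omega(x)\rangle-\phi(\omega(x))]\,\di\Psi_t(x)$, where $\bar r_t(x):=\int r\,\di\nuo_t^x(r)$ is the fiber barycenter. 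To identify $\wvo$ as the maximizer I would compare $\mathcal H_N(\xo^N,\ro^N,\uo^N)\ge\mathcal H_N(\xo^N,\ro^N,\omega(\xo^N))$ against continuous competitors $\omega$, integrate over an arbitrary subinterval $[t_1,t_2]$, and let $N\to+\infty$: the left-hand side converges to the integral of $\mathcal H(\nuo_t,\wvo(t,\cdot))$ by $(b)$ and \eqref{gamma-conv3}, the right-hand side to that of $\mathcal H(\nuo_t,\omega)$, and the arbitrariness of $[t_1,t_2]$ together with a density argument yields \eqref{cond-inf}. The principal difficulty lies in the identification step: each discrete control $\uo_i$ genuinely depends on the individual costate $\ro_i$, yet the limit control depends on $x$ alone. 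This averaging-out of the costate dependence is precisely what the strict convexity of $\phi$ buys through the Jensen equality argument, and it is also what makes the limit maximality a condition on the costate barycenter rather than a fiberwise one.
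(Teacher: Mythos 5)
Your proposal is correct and follows essentially the same route as the paper: uniform state/costate bounds via backward Gr\"onwall, equi-Lipschitz compactness in $W_1$, identification of the limit control density through disintegration plus the equality case in Jensen's inequality under strict convexity of $\phi$, term-by-term passage to the limit in the weak continuity equation, and the maximality condition via Lipschitz competitors on subintervals combined with \eqref{gamma-conv3}, Lebesgue differentiation, and density. The only minor imprecision is that Lemma \ref{flos2} gives the \emph{equality} $\frac1N\sum_i\phi(\uo_i(t))=\Phi(\cdot|\cdot)$ only for the pair $(\muo^N_t,\Psi^N_t)$ on $\R^d$, while for the lifted pair $(\rhoo^N_t,\nuo^N_t)$ on $\R^{2d}$ only the inequality $\geq$ is directly available --- which is all your chain of estimates actually uses, exactly as in the paper.
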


\subsection{Comparison with the Pontryagin Maximum Principle for regular controls}
\label{s:compar}
The existing literature on first order optimality conditions for mean field optimal control problems as \eqref{costo}-\eqref{syst} (i.e., with closed loop structure of admissible controls) relies on an infinity dimensional version of the classical Pontryagin maximum principle which requires $C^1$-differentiability with respect to the space variable of the optimal control. We underline that the closed loop case is the most meaningful in the framework of mean-field optimal control as one can deduce from \cite[Section 6]{FLOS}. In this case, we have the following result. 
\begin{theorem}[{\cite[Theorem 5]{BonRos}} or {\cite[Theorem 4.10]{ADS}}]
\label{th:PontryRegular}
Under the assumptions of Theorem \ref{mainres}, assume, in addition, that $\phi\in C^1(\R^d;[0,+\infty))$. Let $(\Psi,\wvo)\in C([0,T];\PunoR)\times L^1_{\Psi}([0,T];U)$ be a solution to the optimal control problem \eqref{costo}-\eqref{syst} with $U$ a compact non-empty subset of $C^1_b(\R^d;K)$. Then there exists $\sigo\in AC([0,T];\mathcal{P}_c(\R^{2d}))$ which solves in distributional sense
\begin{equation*}
\begin{cases}
\dis\dd \sigo_t = - \mathrm{div}_{(x,r)}\left(\left(J\left(\nabla_{\psi}\mathcal{H}(\sigo_t,\wvo(t,x))(x,r)\right)\right)\sigo_t\right)    & \text{ in } [0,T), \\
\pi^1_{\#}\sigo_t=\Psi_t   & \text{ in } [0,T), \\
\sigo_T= \Psi_T \otimes \delta_{0} \in \mathcal{P}_c(\R^{2d}),
\end{cases}
\end{equation*}
where 
\begin{align*}
&  \left(\begin{array}{c} \Gamma_1(x,\sigo_t,\wvo(t,x)) \\ \Gamma_2(x,r,\sigo_t,\wvo(t,x))\end{array}\right)  =  J\left(\nabla_{\psi}\mathcal{H}(\sigo_t,\wvo(t,x))(x,r)\right)  + \beta (x, r, \sigo_{t}, \wvo)   
\end{align*}
with  $\R^d \ni (y_1,\dots,y_d)= y \mapsto \phi(y)\in \R$ and
\begin{align}
\beta(x, r, \sigo_{t}, \wvo(t, x) ):= \left(\begin{array}{c} 0 \\  \nabla_x^T\wvo(t,x)[h(x,\pi_{\#}^1\sigo_t)r - \nabla_y \phi(\wvo(t,x))]  \end{array}\right). \nonumber
\end{align} 
 \\
Moreover the following maximality condition holds
\begin{equation*}
\displaystyle \wvo(t,\cdot)\in\argmax_{\omega\in U} \left\{\mathcal{H}(\sigo_t,\omega(x)) \right\} \qquad \text{for a.e. }t\in [0,T].
\end{equation*}
\end{theorem}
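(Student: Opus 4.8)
The plan is to read Theorem~\ref{th:PontryRegular} as the superposition of two facts: the regular (needle--variation) Wasserstein Pontryagin maximum principle, which under the extra hypotheses produces the costate measure $\sigo$ and its Hamiltonian flow, together with an explicit identification of that Hamiltonian drift with the drift $(\Gamma_1,\Gamma_2)$ of Theorem~\ref{mainres} up to the correction $\beta$. First I would check that the additional assumptions of this statement --- $\phi\in C^1(\R^d;[0,+\infty))$ and $U$ compact in $C^1_b(\R^d;K)$, so that the admissible controls $\wvo(t,\cdot)$ are spatially $C^1$ --- are exactly what is needed to apply the infinite--dimensional PMP of \cite[Theorem~5]{BonRos} (equivalently \cite[Theorem~4.10]{ADS}). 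With a $C^1$--in--space drift $v+h\wvo$, the continuity equation \eqref{syst} enjoys the Cauchy--Lipschitz and stability theory on which the needle variation rests, and the cited result yields $\sigo\in AC([0,T];\mathcal{P}_c(\R^{2d}))$ solving the symplectic flow $\dd\sigo_t=-\mathrm{div}_{(x,r)}(J(\nabla_\psi\mathcal{H}(\sigo_t,\wvo(t,x))(x,r))\sigo_t)$, with $\pi^1_{\#}\sigo_t=\Psi_t$, $\sigo_T=\Psi_T\otimes\delta_0$, and the maximality condition over $U$.

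The substantive part is to make the Hamiltonian drift explicit and compare it with $(\Gamma_1,\Gamma_2)$. I would compute the Wasserstein differential $\nabla_\psi\mathcal{H}(\sigo_t,\wvo(t,\cdot))(x,r)$ of \eqref{ham-inf} by splitting $\mathcal{H}$ into the transport term $\int\langle r,v(x,\pi^1_{\#}\nu)\rangle\,\di\nu$, the control--transport term $\int\langle r,h(x,\pi^1_{\#}\nu)\wvo(t,x)\rangle\,\di\nu$, the Lagrangian $-L(\pi^1_{\#}\nu)$, and the cost $-\int\phi(\wvo(t,x))\,\di\nu$. Each piece depends on $\nu$ both explicitly, through the outer integral, and implicitly, through the first marginal $\pi^1_{\#}\nu$ entering $v$, $h$, $L$; differentiating in the sense of Definition~\ref{Wmudiff} and using the continuity of $\nabla_x v,\nabla_\psi v,\nabla_x h,\nabla_\psi h,\nabla_\psi L$ from \Hv--\Hh--\HL, the explicit contribution produces the local terms $\nabla_x^{T}v(x,\Psi_t)[r]$, $\nabla_x h(x,\Psi_t)\langle r,\wvo\rangle$, and the like, evaluated at $(x,r)$, while the marginal contribution produces exactly the nonlocal terms $\int\nabla_\psi^{T}v(\tilde x,\Psi_t)(x)[\tilde r]\,\di\sigo_t$, $\int\nabla_\psi h(\tilde x,\Psi_t)(x)\langle\tilde r,\wvo(t,\tilde x)\rangle\,\di\sigo_t$, and $\nabla_\psi L(\Psi_t)(x)$ already appearing in $\Gamma_2$.

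The decisive point --- and the source of the discrepancy $\beta$ --- is that in the Hamiltonian the control is the genuine function $\wvo(t,\cdot)$ evaluated at the running position $x$, so differentiating the integrand in $x$ acts on $\wvo(t,x)$ as well. By the chain rule this generates, in the $x$--block of $\nabla_\psi\mathcal{H}$, the extra terms $h(x,\Psi_t)\nabla_x^{T}\wvo(t,x)[r]$ from the control--transport piece and $-\nabla_x^{T}\wvo(t,x)[\nabla_y\phi(\wvo(t,x))]$ from the cost, both absent from the frozen--control drift $\Gamma_2$ of Theorem~\ref{mainres}. Applying the canonical symplectic matrix $J\colon(a,b)\mapsto(b,-a)$, the $r$--block gives the first component $v(x,\Psi_t)+h(x,\Psi_t)\wvo(t,x)=\Gamma_1$ with no correction (since $\wvo$ does not depend on $r$), whereas the negative $x$--block gives $(J\nabla_\psi\mathcal{H})_2=\Gamma_2-\nabla_x^{T}\wvo(t,x)[h(x,\Psi_t)r-\nabla_y\phi(\wvo(t,x))]$. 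This is precisely $(\Gamma_1,\Gamma_2)=J(\nabla_\psi\mathcal{H})+\beta$ with $\beta$ as stated, and it shows that $\sigo$ solves the announced system; the maximality condition transfers verbatim from the cited PMP.

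I expect the main obstacle to be the careful bookkeeping of this Wasserstein differential: one must keep rigorously separate the three ways in which $x$ enters the Hamiltonian integrand (through the evaluation point of $v$, $h$, $\phi$, through the first marginal $\pi^1_{\#}\nu$, and through the control $\wvo(t,x)$) and track the signs through the application of $J$. Isolating exactly the chain--rule contribution $\nabla_x^{T}\wvo(t,x)[h(x,\Psi_t)r-\nabla_y\phi(\wvo(t,x))]$ --- with the correct sign and pairing structure --- is the delicate computational core, and it is this term, carrying the spatial derivative of the control, that distinguishes the regular PMP from the low--regularity condition of Theorem~\ref{mainres}.
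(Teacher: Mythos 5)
This theorem is stated in the paper as an imported result --- the authors give no proof of their own, citing \cite[Theorem 5]{BonRos} and \cite[Theorem 4.10]{ADS} --- and your proposal is exactly the intended justification: invoke the regular Wasserstein PMP under the added hypotheses $\phi\in C^1$ and $U\subset C^1_b(\R^d;K)$, then verify by the chain rule that $J\left(\nabla_{\psi}\mathcal{H}\right)$ differs from $(\Gamma_1,\Gamma_2)$ precisely by the correction $\beta$ carrying $\nabla_x^T\wvo$. Your sign bookkeeping (the contributions $h(x,\pi_{\#}^1\sigo_t)\nabla_x^{T}\wvo(t,x)[r]$ and $-\nabla_x^{T}\wvo(t,x)[\nabla_y\phi(\wvo(t,x))]$ in the $x$-block, flipped through the costate equation so that $(\Gamma_1,\Gamma_2)=J\left(\nabla_{\psi}\mathcal{H}\right)+\beta$) checks out, so the proposal is correct and follows the same route as the paper.
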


A fundamental remark is that the assumption $\wvo(t,\cdot) \in C^1_b(\R^d;K)$ for almost every $t\in [0,T]$ it is very hard to satisfy for mean field optimal control problem as it is well argued in \cite{LipReg}. It follows that Theorem \ref{th:PontryRegular} is not always applicable. To better compare Theorems \ref{mainres} and \ref{th:PontryRegular}, we briefly discuss the following one-dimensional model case contained in \cite[Section 6]{LipReg}:
\begin{equation}
\label{comp1}
\displaystyle\min_{\omega\in L^1_{\psi}([0,T]\times \R; [-M,M])}\left\{\frac{\lambda}{2}\int_0^T \int_{\R^d} |\omega(t,x)|^2 \di \psi_t(x) \di t -\frac{1}{2}\int_{\mathbb{R}}|x-\overline{\psi_T}|^2 \di \psi_T(x) \right\} 
\end{equation}
subject to 
\begin{equation}
\label{comp2}
\begin{cases} \dd \psi_t = -\mathrm{div}_{x}\left(\omega(t,x)\psi_t\right) & \text{ in }(0,T],  \\
\psi_0= \frac{1}{2}\chi_{[-1,1]}\mathcal{L},
\end{cases}
\end{equation}
where $\displaystyle \overline{\psi_T}:=\int_{\mathbb{R}}x\,\di \psi_T(x)$ and $\lambda, M$ are two positive constants with $\lambda \leq T$. In the optimal control problem \eqref{comp1}-\eqref{comp2} one aims at maximizing the variance at time $T>0$  of a measure $\psi$, while penalizing the running $L^2_\psi$-norm of the control. \\
We fix a sequence of symmetrically distributed empirical measures $ \Psi^N_0:=\frac{1}{N}\sum_{i=1}^N \delta_{\xo_{0,i}^N}\in \mathcal{P}^N([-1,1])$ converging narrowly towards $\frac{1}{2}\chi_{[-1,1]}\mathcal{L}$. It follows that the finite particle optimal problem associated to \eqref{comp1}-\eqref{comp2} is 
\begin{equation}
\label{comp1N}
\displaystyle\min_{\uv^N\in L^1([0,T]; [-M,M]^N)}\left\{\frac{\lambda}{2N}\int_0^T \sum_{i=1}^N |u_i(t)|^2 \di t -\frac{1}{2N} \sum_{i=1}^N |x_i(T)-\overline{\xv(T)}|^2 \right\} 
\end{equation}
subject to 
\begin{equation}
\label{comp2N}
\begin{cases} \dd x_i(t) = u_i(t) & \text{ in }(0,T],  \\
x_i(0)= \xo^N_{0,i},
\end{cases}
\end{equation}
with $\displaystyle \overline{\xv(T)} :=\frac{1}{N}\sum_{i=1}^N x_i(t)$. Let $(\xo^N,\uo^N)$ be an optimal trajectory-control pair to \eqref{comp1}-\eqref{comp2} with generated pairs $(\Psi^N,\muo^N)$. Then, by Proposition \ref{vecchio} (which applies also in presence of a continuous final cost, since the proof depend only on the convexity of $\phi$), we have that $\Psi^N$ converges to $\Psi$ in $C([0,T];\mathcal{P}_1(\R))$ and $\muo^N$ converges weakly* to $\wvo(t,x)\Psi$ in $\mathcal{M} ([0, T]\times \R)$. Moreover $(\Psi,\wvo)$ is an optimal trajectory-control to problem \eqref{comp1}-\eqref{comp2}. In \cite[Proposition 9]{LipReg} it is proved that, since by assumption $\lambda\leq T$, then a uniform Lipschitz constant for the sequence $\uo^N$ of finite-dimensional optimal controls does not exist. Formally, this implies that the limit control $\wvo$ is not smooth making Theorem \ref{th:PontryRegular} inapplicable. On the other hand, we note that the final cost $\displaystyle \varphi(\mu):= -\frac{1}{2}\int_\R |x-\overline{\mu}|^2\di\mu(x)$ is continuously differentiable in the sense of Definition \ref{Wmudiff} and, by explicit calculation, we have
$$
\nabla_\psi\varphi(\mu) = -\mathrm{Id}_\R + \overline{\mu}.
$$
Thus we can apply Theorem \ref{mainres} (with minor modifications for $\nuo_T$ due to the final cost) obtaining that there exists $\nuo\in \mathrm{Lip}([0,T];\mathcal{P}_c(\R^{2}))$ which is a solution to the system
\begin{equation}
\label{comp3}
\begin{cases}
\dis\dd \nuo_t = - \mathrm{div}_{(x,r)}\left(\left(\begin{array}{c} \wvo(t,x) \\ 0 \end{array}\right)\nuo_t\right) & \text{ in }
[0,T), \\
\pi^1_{\#}\nuo_t=\Psi_t & \text{ in } [0,T],\\
\nuo_T=(\mathrm{Id}_\R, \mathrm{Id}_\R-\overline{\Psi_T})_{\#} \Psi_T, \\
\displaystyle \wvo(t,\cdot)\in\argmax_{\omega\in L^1_{\Psi_t}(\R^d;[-M,M])} \left\{\mathcal{H}(\nuo_t,\omega(x)) \right\} & \text{for a.e. }t\in [0,T],
\end{cases}
\end{equation}
where $\mathcal{H}: \mathcal{P}_b(\R^{2}) \times \mathcal{M}(\R;\R) \to \R$ is defined by
\begin{eqnarray*}
\mathcal{H}(\nu,\omega):= 
\begin{cases} \displaystyle
\int_{\R^{2}} r\omega(x) \di \nu(x,r) - \frac{\lambda}{2}\int_{\R^{2}} |\omega(x)|^2 \di \nu(x,r) &
\text{if }\omega \in L^1_{\pi_{\#}^1\nu}(\R^d;[-M,M]), \\
+\infty & \text{ otherwise}.
\end{cases} \nonumber
\end{eqnarray*}

We conclude by observing as in this simple model case it is clear how system \eqref{comp3} is the limit as $N\to +\infty$ of the classical first order optimality condition for the finite particle problem \eqref{comp1N}-\eqref{comp2N}, i.e. $\nuo$ is the limit of $\displaystyle \nuo^N= \frac{1}{N}\sum_{i=1}^N \delta_{(\xo_i(t),\ro_i(t))}\otimes \mathcal{L}_{|[0,T]}$ (see \eqref{def-nuo}) in $C([0,T];\mathcal{P}_1(\R^2))$ as $N\to+\infty$, where $(\xo^N,\ro^N,\uo^N)\in AC([0,T];(\R^2)^N)\times L^1([0,T]; [-M,M]^N)$ is the solution given by the classical Pontryagin maximum principle to 
\begin{equation*}
\begin{cases}
\dis\dd \left(\begin{array}{c} \xo_i(t) \\ \ro_i(t) \end{array}\right) =
\left(\begin{array}{c}
\uo_i(t) \\
0
\end{array}\right) & \text{ in } [0,T), \\
\xo_i (0)=\xo_{0,i}^N, \\
\ro_i(T) = \xo_i(T)-\overline{\xo(T)}, \\
\dis \uo^N (t)\in \argmax_{\uv^N\in [-M,M]^N} \left\{ \mathcal H_N(\xo^N(t),\ro^N(t),\uv^N)\right\} & \text{ a.e. }t\in [0,T],
\end{cases}
\quad \text{for every }i=1,\dots,N,
\end{equation*}
where the Hamiltonian $\mathcal H_N\colon  (\R^2)^N \times [-M,M]^N \to \R$ is defined by
\begin{equation*}
\mathcal H_N(\xv^N,\rv^N,\uv^N):= \frac{1}{N} \sum_{k=1}^N r_k u_k - \frac{\lambda}{2N}\sum_{k=1}^N |u_k|^2.
\end{equation*}

In Section \ref{s:convexcase} we will see how our results generalize to the case where the state space is the convex metric space $\R^d\times \mathcal{P}(U)$ equipped with the topology induced from the separable Banach space $(\R^d \times \mathcal{F}(U), |\cdot|+\|\cdot\|_{BL})$, for $U$ a discrete and finite set and if the controls act only on the $\R^d$ component of the velocity field. \\

\section{Proof of the results}
\label{s:proofs}

To lighten the notation, in all this section, we will denote with $\bm{(\mathrm H)}$ all the assumptions \HI, \Hv, \Hh, \HL, \Hphi. Moreover, any time we write $\|\cdot\|$ we mean the norm of the space of matrices $\R^{d \times d}$. \\
We recall that $\hat{\Psi}_0\in \PCR$ is a fixed initial measure and $\xo_0^N\in (\R^d)^N$ is a fixed sequence of initial data satisfying \HI. Moreover, for every optimal trajectory-control pair $(\xo^N,\uo^N)\in AC([0,T];(\Rd)^N) \times \K^N$ for \eqref{costoN}-\eqref{systN}, $(\Psi,\wvo)\in C([0,T];\PunoR)\times L^1_{\Psi}([0,T]\times \R^d;K)$ is the solution to the optimal control problem \eqref{costo}-\eqref{syst} given by Proposition \ref{vecchio} and $\ro^N\in AC([0,T];(\R^{d})^N)$ the rescaled costate given by Proposition \ref{vecchio2}. \\

With the aim of proving Theorem \ref{mainres} we give some preliminary lemmas. In the first one we prove that the solution $(\xo_i(t), \ro_i(t))$ of system \eqref{sys-tot-N} is contained in a compact set uniformly in $N\in\N$. 

\begin{lemma}
\label{lemma1}
Assume $\bm{(\mathrm H)}$. Let $(\xo^N,\ro^N,\uo^N)\in AC([0,T];(\R^{2d})^N) \times \K^N$ be the solution to \eqref{sys-tot-N} given by Proposition \ref{vecchio2}. Then there exists $\mathcal{R}>0$ depending only on $\supp(\hat{\Psi}_0)$ and $T$ and independent of~$N$ such that
\begin{equation}
\label{bnd-N}
\sup_{i=1,\dots,N}\|\xo_i\|_{L^{\infty}([0,T];\R^d)} + \sup_{i=1,\dots,N}\|\ro_i\|_{L^{\infty}([0,T];\R^d)} \leq \mathcal{R}.
\end{equation}
Equivalently, it holds that 
\begin{equation}
\label{bnd-nu}
\supp(\nuo_t^N)\subseteq B_{\mathcal{R}}(0)\subset \R^{2d} \qquad \forall t\in [0,T] \text{ and } \forall N\in \N.
\end{equation}
\end{lemma}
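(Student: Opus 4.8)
The plan is to decouple the estimate into a \emph{forward} bound for the positions $\xo_i$ followed by a \emph{backward} bound for the costates $\ro_i$, reducing each to a Grönwall inequality; the key to uniformity in $N$ is that, once the position bound is in place, every nonlinear coefficient is evaluated on a fixed compact set independent of $N$.

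First I would bound the positions. The equation for $\xo_i$ in \eqref{eq-N} does not involve the costates, so the $\xo$-variables form a closed subsystem, and the precise form of the optimal control is irrelevant: it suffices that $\uo_i(t)\in K$, hence $|\uo_i(t)|\le M_K:=\max_{u\in K}|u|$. Using the sublinear growth $(v_2)$ and the uniform bound $M_h:=\sup|h|$ from $(h_1)$, I estimate $|\dot{\xo}_i(t)|\le M_v\big(1+|\xo_i(t)|+m_1(\Psi_t^N)\big)+M_hM_K$. Setting $R(t):=\max_i|\xo_i(t)|$ and noting $m_1(\Psi_t^N)=\frac1N\sum_j|\xo_j(t)|\le R(t)$, I obtain the differential inequality $\tfrac{d}{dt}R(t)\le C_1\big(1+R(t)\big)$ for the Lipschitz function $R$; since $R(0)\le\sup_{y\in\supp(\hat{\Psi}_0)}|y|+1$ by \HI, Grönwall yields a bound $R(t)\le R_x$ depending only on $\supp(\hat{\Psi}_0)$ and $T$.

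Second, with $R_x$ fixed, all $\xo_i(t)$ lie in $B_{R_x}$ and $\Psi_t^N\in\mathcal{P}(B_{R_x})$. Since $\mathcal{P}(B_{R_x})$ is compact in $(\mathcal{P}_1,W_1)$, the continuity assumptions $(v_3)$, $(v_4)$, $(h_3)$, $(h_4)$, $(L_2)$ guarantee that $\nabla_x v$, $\nabla_\psi v$, $\nabla_x h$, $\nabla_\psi h$, $\nabla_\psi L$ are uniformly bounded on the relevant compact domains by a constant depending only on $R_x$. Writing the mean-field integral terms in the costate equation of \eqref{eq-N} explicitly as normalized averages $\frac1N\sum_j$ over the other particles, and setting $P(t):=\max_i|\ro_i(t)|$, these bounds give $|\dot{\ro}_i(t)|\le C_2 P(t)+C_3$. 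Using the terminal condition $\ro_i(T)=0$, I integrate backward to get $P(t)\le C_3(T-t)+C_2\int_t^T P(\tau)\,d\tau$, and a backward Grönwall estimate yields $P(t)\le R_r:=C_3Te^{C_2T}$. Taking $\mathcal{R}:=R_x+R_r$ proves \eqref{bnd-N}; the equivalence with \eqref{bnd-nu} is immediate, since $\nuo_t^N$ is supported on $\{(\xo_i(t),\ro_i(t))\}_i$ and $|(\xo_i,\ro_i)|\le|\xo_i|+|\ro_i|\le\mathcal{R}$.

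The main obstacle is the uniformity in $N$ of the gradient bounds: they rest on continuity of the coefficients over the compact Wasserstein ball $\mathcal{P}(B_{R_x})$, which is legitimate only because the position radius $R_x$ was secured first and is itself $N$-independent. A second point requiring care is that the mean-field coupling in the costate equation involves \emph{all} the costates $\ro_j$; because they enter solely through the normalized average $\frac1N\sum_j$, they are controlled by $\max_j|\ro_j|=P(t)$ with no loss, which is exactly what keeps the constants $C_2,C_3$ — and hence $R_r$ — independent of $N$.
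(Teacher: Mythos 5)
Your proof is correct and follows essentially the same strategy as the paper: a forward bound on the positions, uniform bounds on all gradient terms via continuity on the compact sets $B_{R_x}\times\mathcal{P}(B_{R_x})\times B_{R_x}$, and a backward Gr\"onwall estimate on $\sup_i|\ro_i(t)|$ starting from $\ro_i(T)=0$. The only (immaterial) difference is that the paper obtains the position bound by citing \cite[Proposition 1]{AAMS} rather than rerunning the Gr\"onwall argument with $(v_2)$ and $(h_1)$ as you do.
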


\begin{proof}
First we note that, thanks to \cite[Proposition 1]{AAMS} and recalling that $\xo_{0,i}^N\in B_1^{\hat{\Psi}_0}$ for every $i=1,\dots,N$ and for all $N\in\N$, we have
\begin{equation}
\label{pr1}
\sup_{i=1,\dots,N}\|\xo_i\|_{L^{\infty}([0,T];\R^d)} \leq R_1,
\end{equation}
for some $R_1>0$ dependent on $\supp(\hat{\Psi}_0)$ and $T$ and independent of $N$. Therefore, by \eqref{def-psi}, we deduce that $\supp(\Psi^N_t)\subset B_{R_1}(0) \subset \R^d$ for every $t\in [0,T]$ and $N\in \N$. This implies that $\Psi^N_t$ is a tight and $1$-uniformly integrable sequence in $\mathcal{P}_1(\R^d)$. Thus, by \cite[Proposition 7.1.5]{AGS}, there exists a compact subset $K_1$ of $\mathcal{P}_1(\R^d)$ such that $\Psi^N_t\in K_1$ for every $t\in [0,T]$ and for any $N\in\N$. Moreover, by \cite[Proposition 5.1.8]{AGS}, up to a subsequence in $N$, $K_1\subset \mathcal{P}_c(\R^d)$. \\
Now we focus on $\ro_i$. By Lemma \ref{grad-identi}, we have the following identifications:
$$\nabla_{x_i}^T v(\xo_k(t),\Psi_t^N) = \frac{1}{N}\nabla^{T}_{\psi}v(\xo_k(t),\Psi_t^N)(\xo_i(t)), \qquad N\nabla_{x_i} L(\Psi_t^N) = \nabla_{\psi} L(\Psi_t^N)(\xo_i(t))
$$ and 
$$
\nabla_{x_i} h(\xo_k(t),\Psi_t^N) = \frac{1}{N}\nabla_{\psi} h(\xo_k(t),\Psi_t^N)(\xo_i(t)).
$$
Hence, it follows from \eqref{sys-tot-N} and \eqref{sys-field-N} that
\begin{eqnarray}
\label{pr2}
|\ro_i(t)|& \leq & \int_t^T \|\nabla_x^T v(\xo_i(\tau),\Psi_\tau^N)\||\ro_i(\tau)|\di \tau + \int_t^T \frac{1}{N}\sum_{k=1}^N  \|\nabla^{T}_{\psi}v(\xo_k(\tau),\Psi_\tau^N)(\xo_i(\tau))\||\ro_k(\tau)| \di \tau \nonumber \\
&& + \int_t^T | \nabla_{\psi} L(\Psi_\tau^N)(\xo_i(\tau))| \di \tau \nonumber + \int_t^T |\nabla_x h(\xo_i(\tau),\Psi_\tau^N)| |\ro_i(\tau)||\uo_i(\tau)| \di \tau \nonumber \\
&& + \int_t^T \frac{1}{N}\sum_{k=1}^N |\nabla_{\psi} h(\xo_k(\tau),\Psi_\tau^N)(\xo_i(\tau))| |\ro_k(\tau)||\uo_k(\tau)|\di \tau.
\end{eqnarray}
Now, since $\uo_i(\tau)\in K$ which is compact in $\R^d$, $\Psi^N_\tau\in K_1$ which is compact in $\mathcal{P}_c(\R^d)$ and $\xo_i(\tau)\in B_{R_1}(0)$ which is compact in $\R^d$ for every $i=1,\dots,N$, for any $N\in \N$ and for every $\tau\in [0,T]$, using the continuity assumptions \Hv-$(v_3,v_4)$, \Hh-$(h_3,h_4)$ and \HL-$(L_2)$, we have for some positive constant $M$ not depending on $i$, $k$, $N$ and $\tau$ that
\begin{eqnarray}
\label{pr3}
&&\|\nabla_x^T v(\xo_i(\tau),\Psi_\tau^N)\| + \|\nabla^{T}_{\psi}v(\xo_k(\tau),\Psi_\tau^N)(\xo_i(\tau))\|  + | \nabla_{\psi} L(\Psi_\tau^N)(\xo_i(\tau))| \nonumber \\
&&+ |\nabla_x h(\xo_i(\tau),\Psi_\tau^N)||\uo_i(\tau)| + |\nabla_{\psi} h(\xo_k(\tau),\Psi_\tau^N)(\xo_i(\tau))||\uo_k(\tau)| \leq M.
\end{eqnarray}
Combining \eqref{pr2} and \eqref{pr3} we obtain
\begin{eqnarray*}
|\ro_i(t)|& \leq & M(T-t) + 2M\left(\int_t^T |\ro_i(\tau)|\di\tau + \int_t^T \frac{1}{N}\sum_{k=1}^N |\ro_k(\tau)|\di \tau\right) \nonumber \\
&\leq& M(T-t) + 4M \int_t^T \sup_{i=1,\dots,N}|\ro_i(\tau)| \di \tau.
\end{eqnarray*}
Taking the supremum over $i\in\{1,\dots,N\}$ in the previous inequality and applying Gr\"onwall inequality we deduce for some positive $R_2>0$ depending on $M$ and $T$ and not depending on $t\in[0,T]$ and $N\in \N$ that
\begin{equation}
\label{pr4}
\sup_{i=1,\dots,N}|\ro_i(t)| \leq R_2 \qquad\forall t\in [0,T].
\end{equation}
Thus \eqref{bnd-N} follows immediately from \eqref{pr1} and \eqref{pr4}. Finally, \eqref{bnd-nu} is a direct consequence of \eqref{bnd-N} and of the definition of $\nuo_t^N$ (i.e. \eqref{def-nuo}).
\end{proof}

In the second one we show that $\nuo^N$ defined in \eqref{def-nuo} is Lipschitz continuous in time with uniform Lipschitz constant in $N\in \N$.

\begin{lemma}
\label{lemma2}
Assume $\bm{(\mathrm H)}$. Then there exists $L$ not depending on $N\in \N$ such that 
\begin{equation}
\label{lip-nu}
W_1(\nuo_t^N,\nuo_s^N) \leq L|t-s| \qquad \forall s,t\in [0,T].
\end{equation}
\end{lemma}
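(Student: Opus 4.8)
The plan is to use that $\nuo_t^N$ and $\nuo_s^N$ are empirical measures supported on the same number $N$ of atoms, so that the \emph{diagonal coupling} provides an admissible transport plan giving a sharp upper bound for $W_1$. Without loss of generality assume $s\le t$. Recalling \eqref{def-nuo}, I would introduce
\[
\bm\gamma:=\frac1N\sum_{i=1}^N \delta_{\big((\xo_i(s),\ro_i(s)),\,(\xo_i(t),\ro_i(t))\big)}\in\Gamma(\nuo_s^N,\nuo_t^N),
\]
which has the correct marginals by construction. Testing the definition of $W_1$ against $\bm\gamma$ and using that each curve $\tau\mapsto(\xo_i(\tau),\ro_i(\tau))$ is absolutely continuous gives
\[
W_1(\nuo_t^N,\nuo_s^N)\le \frac1N\sum_{i=1}^N\big|(\xo_i(t),\ro_i(t))-(\xo_i(s),\ro_i(s))\big|\le \frac1N\sum_{i=1}^N\int_s^t\Big|\dd(\xo_i(\tau),\ro_i(\tau))\Big|\,\di\tau,
\]
so that everything reduces to a pointwise-in-time, uniform-in-$(i,N)$ bound on the velocity field driving \eqref{sys-tot-N}.

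Such a bound is read off from the explicit expression \eqref{eq-N}. By Lemma \ref{lemma1} all the pairs $(\xo_i(\tau),\ro_i(\tau))$ stay in the fixed ball $B_{\mathcal{R}}(0)\subset\R^{2d}$, the measures $\Psi_\tau^N$ range in a fixed compact subset $K_1\subset\mathcal{P}_c(\R^d)$ (as in the proof of Lemma \ref{lemma1}), and $\uo_i(\tau)\in K$, which is compact. For the state component $v(\xo_i,\Psi_\tau^N)+h(\xo_i,\Psi_\tau^N)\uo_i$ the linear growth $(v_2)$, the boundedness $(h_1)$ and $\uo_i\in K$ immediately give a bound independent of $i,N,\tau$. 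For the costate component I would invoke the continuity assumptions $(v_3),(v_4),(h_3),(h_4)$ and $(L_2)$: on the compact sets just described the differentials $\nabla_x v$, $\nabla_\psi v$, $\nabla_x h$, $\nabla_\psi h$ and $\nabla_\psi L$ are bounded — this is precisely estimate \eqref{pr3} — while $|\ro_i|,|\tilde r|\le\mathcal{R}$ in the relevant supports; hence each of the five terms is controlled uniformly in $i,N,\tau$. Consequently there exists $L>0$, independent of $i,N$, with $|\dd(\xo_i(\tau),\ro_i(\tau))|\le L$ for a.e.\ $\tau$.

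Plugging this into the previous display yields
\[
W_1(\nuo_t^N,\nuo_s^N)\le\frac1N\sum_{i=1}^N\int_s^tL\,\di\tau=L\,(t-s)=L\,|t-s|,
\]
which is \eqref{lip-nu}. I expect no genuine obstacle here: the only nontrivial ingredient is the uniform bound on the costate velocity, and this has already been carried out in Lemma \ref{lemma1} (cf.\ \eqref{pr3}); the new content is merely the observation that the diagonal coupling converts the pointwise trajectory estimates into a Wasserstein-$1$ estimate, together with the easy uniform bound on the state velocity coming from $(v_2)$ and $(h_1)$.
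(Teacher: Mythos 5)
Your proposal is correct and follows essentially the same route as the paper: the diagonal coupling of the two empirical measures reduces the $W_1$ estimate to a uniform-in-$(i,N,\tau)$ bound on the velocity field of \eqref{sys-tot-N}, which the paper likewise obtains from the compactness established in Lemma \ref{lemma1} together with the differential bounds \eqref{pr3} (the paper bounds the $x$- and $r$-increments separately in \eqref{pr6} and \eqref{pr8}, but the substance is identical). No gaps.
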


\begin{proof}
By definition of $\nuo_t^N$ (see \eqref{def-nuo}) and by the properties of the Wasserstein distance, it holds that
\begin{equation}
\label{pr5}
W_1(\nuo_t^N,\nuo_s^N) \leq \frac{\sqrt{2}}{N}\sum_{i=1}^N \left(|\xo_i(t)-\xo_i(s)| + |\ro_i(t)-\ro_i(s)|\right).
\end{equation}
First we prove that $L_1>0$ not depending on $i$ and $N$ exists such that  
\begin{equation}
\label{pr6}
|\xo_i(t)-\xo_i(s)| \leq L_1 |t-s|.
\end{equation}
By \eqref{sys-tot-N} and \eqref{sys-field-N} we have
\begin{eqnarray}
\label{pr7}
|\xo_i(t)-\xo_i(s)| \leq \int_s^t \left(|v(\xo_i(\tau),\Psi_\tau^N)| + |h(\xo_i(\tau),\Psi_\tau^N)||\uo_i(\tau)|\right) \di\tau.
\end{eqnarray}
From the proof of Lemma \ref{lemma1}, we know that $\uo_i(\tau)\in K$ which is compact in $\R^d$, $\Psi^N_\tau\in K_1$ which is compact in $\mathcal{P}_c(\R^d)$ and $\xo_i(\tau)\in B_{R_1}(0)$ which is compact in $\R^d$ for every $i=1,\dots,N$, for any $N\in \N$ and for every $\tau\in [0,T]$. Since, by assumptions, $v$ and $h$ are continuous, then $L_1>0$ not depending on $i$, $N$ and $\tau$ exists such that
$$
|v(\xo_i(\tau),\Psi_\tau^N)| + |h(\xo_i(\tau),\Psi_\tau^N)||\uo_i(\tau)|\leq L_1.
$$
The previous inequality combined with \eqref{pr7} gives \eqref{pr6}. \\
Now we focus on the second term on the right-hand side of \eqref{pr5}. Using again \eqref{sys-tot-N} and \eqref{sys-field-N} and following the proof of Lemma \ref{lemma1} (in particular using \eqref{pr3}) we obtain
\begin{eqnarray}
\label{pr8}
|\ro_i(t)-\ro_i(s)|& \leq & \int_s^t \|\nabla_x^T v(\xo_i(\tau),\Psi_\tau^N)\||\ro_i(\tau)|\di \tau \nonumber \\
&&+ \int_s^t \frac{1}{N}\sum_{k=1}^N  \|\nabla^{T}_{\psi}v(\xo_k(\tau),\Psi_\tau^N)(\xo_i(\tau))\||\ro_k(\tau)| \di \tau \nonumber \\
&& + \int_s^t | \nabla_{\psi} L(\Psi_\tau^N)(\xo_i(\tau))| \di \tau \nonumber + \int_t^T |\nabla_x h(\xo_i(\tau),\Psi_\tau^N)| |\ro_i(\tau)||\uo_i(\tau)| \di \tau \nonumber \\
&& + \int_s^t \frac{1}{N}\sum_{k=1}^N |\nabla_{\psi} h(\xo_k(\tau),\Psi_\tau^N)(\xo_i(\tau))| |\ro_k(\tau)||\uo_k(\tau)|\di \tau \nonumber \\
&\stackrel{\eqref{bnd-N},\eqref{pr3}}\leq & M(1+4\mathcal{R})|t-s|.
\end{eqnarray}
Therefore, inserting \eqref{pr6} and \eqref{pr8} in \eqref{pr5}, we deduce \eqref{lip-nu}.
\end{proof}

Thanks to Lemma \ref{lemma1} and Lemma \ref{lemma2}, in the next result we prove that $\nuo^N$ (defined by \eqref{def-nuo}) and $\rhoo^N$ (defined by \eqref{def-rhoo}) admit limit and we characterize the limit of $\rhoo^N$ in terms of the limit of $\muo^N$ (defined by \eqref{def-muo}).

\begin{lemma}
\label{lemma3}
Assume $\bm{(\mathrm H)}$. Then the following hold (up to a subsequence):
\begin{itemize}
\item[$(a)$] there exists $\nuo\in \mathrm{Lip}([0,T];\mathcal{P}_c(\R^{2d}))$ such that $\nuo^N \to \nuo$ in $C([0,T];\mathcal{P}_1(\R^{2d}))$ as $N\to +\infty$;
\item[$(b)$] there exists $\rhoo \in \mathcal{M}([0,T]\times \R^{2d}; \R^d)$ such that $\rhoo^N \to \rhoo$ narrowly in $\mathcal{M}([0,T]\times \R^{2d}; \R^d)$ as $N\to +\infty$ with $\rhoo=\wvo(t,x)\nuo$, where $\wvo$ is given by Proposition \ref{vecchio}.
\end{itemize}
\end{lemma}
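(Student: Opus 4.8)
The plan is to obtain the two convergences by compactness and then to identify the limiting control density through a disintegration-and-convexity argument.

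\emph{Part (a).} First I would run an Ascoli--Arzelà compactness argument for the curves $t\mapsto\nuo_t^N$. By Lemma \ref{lemma1} (estimate \eqref{bnd-nu}) all the measures $\nuo_t^N$ are supported in the fixed compact ball $B_{\mathcal{R}}(0)\subset\R^{2d}$, hence they live in the compact metric space $(\mathcal{P}(B_{\mathcal{R}}(0)),W_1)$ and the family is pointwise relatively compact in $\mathcal{P}_1(\R^{2d})$. By Lemma \ref{lemma2} (bound \eqref{lip-nu}) these curves are equi-Lipschitz in time, hence equicontinuous. The Ascoli--Arzelà theorem for maps from $[0,T]$ into a compact metric space then produces a subsequence converging uniformly to some $\nuo$, i.e.\ $\nuo^N\to\nuo$ in $C([0,T];\mathcal{P}_1(\R^{2d}))$. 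Since each $\nuo_t^N$ is supported in $B_{\mathcal{R}}(0)$ the same holds for $\nuo_t$, so $\nuo_t\in\mathcal{P}_c(\R^{2d})$, and the bound \eqref{lip-nu} passes to the limit, giving $\nuo\in\mathrm{Lip}([0,T];\mathcal{P}_c(\R^{2d}))$.

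\emph{Part (b): extraction and absolute continuity.} Since $\uo_i(t)\in K$ with $K$ compact, the vector measures $\rhoo^N$ of \eqref{def-rhoo} have total variation bounded by $T\max_{u\in K}|u|$ uniformly in $N$, and by \eqref{bnd-nu} they are all supported in the compact set $[0,T]\times B_{\mathcal{R}}(0)$. Weak-$*$ compactness of bounded sets of measures on a compact set (where narrow and weak-$*$ convergence coincide) yields a subsequence with $\rhoo^N\to\rhoo$ narrowly for some $\rhoo\in\mathcal{M}([0,T]\times\R^{2d};\R^d)$. To obtain $\rhoo\ll\nuo$ I would use the functional $\Phi$ of \eqref{def-flos}: the pair $(\nuo_t^N,\rhoo_t^N)$ is generated by the controls $\uo^N$ exactly as in Lemma \ref{flos2}, so that lemma gives $\Phi(\rhoo^N|\nuo^N)=\int_0^T\frac1N\sum_i\phi(\uo_i(t))\,\di t\le T\max_K\phi<+\infty$. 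Applying the lower-semicontinuity Lemma \ref{flos1} on $[0,T]\times\R^{2d}$ to $\rhoo^N\to\rhoo$ and $\nuo^N\to\nuo$ then forces $\rhoo\ll\nuo$, say $\rhoo=\rho\,\nuo$ with $\rho\in L^1_{\nuo}([0,T]\times\R^{2d};\R^d)$.

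\emph{Part (b): identification of the density, and the main obstacle.} This last step is the heart of the matter. Writing $\pi\colon(t,x,r)\mapsto(t,x)$ for the projection forgetting the co-state, I would pass to the limit in $\pi_{\#}\nuo^N=\Psi^N$ and in $\pi_{\#}\rhoo^N=\muo^N$, using Proposition \ref{vecchio} (where $\muo^N\to\wvo\Psi$), to get $\pi_{\#}\nuo=\Psi$ and $\pi_{\#}\rhoo=\wvo\Psi$. Disintegrating $\nuo=\int\nuo_{(t,x)}\,\di\Psi(t,x)$ along $\pi$ and comparing $\pi_{\#}(\rho\,\nuo)$ with $\wvo\Psi$ yields only the fibrewise \emph{average} identity $\int\rho(t,x,r)\,\di\nuo_{(t,x)}(r)=\wvo(t,x)$ for $\Psi$-a.e.\ $(t,x)$. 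Upgrading this to the pointwise statement $\rho(t,x,r)=\wvo(t,x)$ is the main obstacle, and it is where strict convexity of $\phi$ enters. From Lemma \ref{flos1}, Lemma \ref{flos2} and the sharp energy convergence \eqref{gamma-conv2} I obtain $\Phi(\rhoo|\nuo)\le\int_0^T\int_{\R^d}\phi(\wvo(t,x))\,\di\Psi_t(x)\,\di t$; on the other hand, disintegration together with Jensen's inequality applied on each fibre and the average identity gives $\Phi(\rhoo|\nuo)=\int\big(\int\phi(\rho)\,\di\nuo_{(t,x)}\big)\di\Psi\ge\int\phi(\wvo)\,\di\Psi$. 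Hence equality holds throughout, forcing equality in Jensen's inequality for $\Psi$-a.e.\ $(t,x)$; by strict convexity of $\phi$ this means $\rho(t,x,\cdot)$ is $\nuo_{(t,x)}$-a.s.\ constant and equal to $\wvo(t,x)$, that is $\rhoo=\wvo(t,x)\nuo$, which is the claim.
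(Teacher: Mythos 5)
Your proposal is correct and follows essentially the same route as the paper: Ascoli--Arzel\`a with the uniform support and equi-Lipschitz bounds for $(a)$, then Prokhorov, the lower semicontinuity of $\Phi$ from Lemma \ref{flos1} for absolute continuity, projection and disintegration to get the fibrewise average identity, and the energy convergence \eqref{gamma-conv2} combined with Jensen and strict convexity of $\phi$ to force the density to be constant on fibres. The only (harmless) imprecision is writing $\Phi(\rhoo^N|\nuo^N)=\int_0^T\frac1N\sum_i\phi(\uo_i(t))\,\di t$: for the lifted pair on $\R^{2d}$ Lemma \ref{flos2} only guarantees ``$\leq$'' (points $(\xo_i,\ro_i)$ could coincide), but that is precisely the direction your argument needs.
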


\begin{proof}
\textit{Proof of $(a)$.}
By Lemma \ref{lemma1}, we know that $\supp(\nuo^N_t)\subset B_{\mathcal{R}}(0) \subset \R^{2d}$ for every $t\in [0,T]$ and $N\in \N$. This implies that $\nuo^N_t$ is a tight and $1$-uniformly integrable sequence in $\mathcal{P}_1(\R^{2d})$. Then, by \cite[Proposition 7.1.5]{AGS}, $\nuo^N_t$ is relatively compact in $\mathcal{P}_1(\R^{2d})$ for every $t\in [0,T]$. Moreover, by Lemma \ref{lemma2}, $\nuo^N_t$ is equi-Lipschitz continuous in $t$. Thus we can apply the Ascoli-Arzel\'a theorem, obtaining that there exists $\nuo\in C([0,T];\mathcal{P}_1(\R^{2d}))$ such that, up to a subsequence, $\nuo^N \to \nuo$ in $C([0,T];\mathcal{P}_1(\R^{2d}))$ as $N\to +\infty$. The fact that $\nuo_t \in \mathcal{P}_c(\R^{2d})$ for every $t\in [0,T]$ follows from \cite[Proposition 5.1.8]{AGS} and the Lipschitz continuity of $\nuo$ follows from \eqref{lip-nu} and applying \cite[Proposition 7.1.3]{AGS}. \\
\textit{Proof of $(b)$}. Since $\uo_i(t)\in K$ (compact) for every $t\in [0,T]$, we have for some $M>0$ that 
$$
\|\rhoo^N\|_{\mathcal{M}([0,T]\times \R^{2d}; \R^d)} \stackrel{\eqref{def-rhoo}}\leq \int_0^T \frac{1}{N} \sum_{i=1}^N |\uo_i(t)| \leq MT.
$$
Moreover, by \eqref{def-rhoo} and \eqref{bnd-nu}, $\supp(\rhoo^N)\subseteq [0,T] \times B_\mathcal{R}(0)$, which implies that $\rhoo^N$ is tight in $\mathcal{M}([0,T]\times \R^{2d}; \R^d)$. Hence, applying Prokhorov theorem, there exists $\rhoo\in \mathcal{M}([0,T]\times \R^{2d}; \R^d)$ such that, up to a subsequence, $\rhoo^N$ narrowly converges to $\rhoo$ as $N\to +\infty$. Thanks to Lemma \ref{flos2} and Lemma \ref{flos1} there exists $\eta\in L^1_{\nuo}([0,T]\times \R^{2d};\R^d)$ such that $\rhoo=\eta(t,x,r)\nuo$. \\
Finally we prove that $\rhoo=\wvo\nuo$, i.e. $\eta(t,x,r)=\wvo(t,x)$ for every $(t,x,r)\in [0,T]\times \R^{2d}$. This is done in two steps. Let $\pi^{0,1}:[0,T]\times \R^{2d}\to [0,T]\times \R^d$ be the projection defined as $\pi^{0,1}(t,x,r)=(t,x)$. Then, as $\rhoo^N$ converges narrow to $\rhoo$ and $\pi_{\#}^{0,1}\rhoo^N = \muo^N$, by \cite[Lemma 5.2.1]{AGS}, it holds $\pi_{\#}^{0,1}\rhoo=\muo=\wvo\Psi$. Similarly we have $\pi_{\#}^{0,1}\nuo=\Psi$. Applying the Disintegration Theorem (see \cite[Theorem 5.3.1]{AGS}) with respect to $\Psi=\pi_{\#}^{0,1}\nuo$, we get $\rhoo=\eta(t,x,r)\nuo^{x}_t\otimes \Psi$ for $\Psi$-a.e. $(t,x)\in [0,T]\times \R^d$ where $\nuo_t^x\in \mathcal{P}(\R^d)$. It now holds
\begin{equation}
\label{pr9bis}
\int_{\R^d} \eta(t,x,r)\di \nuo_t^x(r) = \wvo(t,x),
\end{equation}
hence it is sufficient to show that $\eta$ does not depend on $r$. \\
We start noticing that, applying Lemma \ref{flos2} and Lemma \ref{flos1},
\begin{eqnarray}
\label{pr9}
&& \int_0^T \int_{\R^d} \phi(\wvo(t,x))\di\Psi_t(x)\di t = \int_0^T \int_{\R^d} \phi\left(\frac{\di\muo}{\di\Psi}\right) \di \Psi_t(x)\di t \nonumber \\
&&\stackrel{\eqref{gamma-conv2}}= \liminf_{N\to +\infty} \int_0^T \int_{\R^d} \phi\left(\frac{\di\muo^N}{\di \Psi^N}\right)\di \Psi_t^N(x)\di t  = \liminf_{N\to +\infty} \int_0^T \frac{1}{N}\sum_{i=1}^N \phi(\uo_i(t))\di t \nonumber \\
&& \geq \liminf_{N\to +\infty} \int_0^T \int_{\R^{2d}} \phi\left(\frac{\di \rhoo^N}{\di \nuo^N} \right)\di \nuo^N_t(x,r)\di t \geq \int_0^T \int_{\R^{2d}} \phi\left(\frac{\di \rhoo}{\di \nuo}\right) \di \nuo_t(x,r)\di t.
\end{eqnarray}
We now proceed in the above inequality, by using Jensen's inequality and the properties of the disintegration, as follows
\begin{eqnarray*}
&&\int_0^T \int_{\R^d} \phi(\wvo(t,x))\di\Psi_t(x)\di t  \stackrel{\eqref{pr9}}\geq  \int_0^T \int_{\R^{2d}} \phi\left(\eta(t,x,r)\right) \di \nuo_t(x,r)\di t \\
&& =\int_0^T \int_{\R^d}\int_{\R^d} \phi(\eta(t,x,r))\di \nuo_t^x(r)\di \Psi_t(x)\di t \geq \int_0^T \int_{\R^d} \phi\left(\int_{\R^d}\eta(t,x,r)\di \nuo_t^x(r)\right) \di \Psi_t(x)\di t \\
&& \stackrel{\eqref{pr9bis}}= \int_0^T \int_{\R^d} \phi(\wvo(t,x))\di\Psi_t(x)\di t.
\end{eqnarray*}
Therefore, for $\Psi$-a.e. $(t,x)\in [0,T] \times \R^d$, the equality case in Jensen's inequality must hold. As $\phi$ is strictly convex (see \Hphi), the only possibility is that $\eta$ does not depend on $r$ which in turn implies, by \eqref{pr9bis}, that 
$$
\eta(t,x) = \int_{\R^d} \eta(t,x,r) \di\nuo_t^x(r) = \wvo(t,x).
$$
\end{proof}

Now we are ready to pass the system \eqref{eq-N} to the limit as $N\to +\infty$ and to prove our main result.

\begin{proof}[Proof of Theorem \ref{mainres}]
We start by noting that $(a)$ and $(b)$ in the statement are given directly by Lemma \ref{lemma3}. \\
Now we focus on the second and the third equality of system \eqref{syst-tot-inf}. Let us recall for the following that if a sequence of measure converges in the Wasserstein space $\mathcal{P}_1(\R^{d})$ then it also converges narrowly. As regards the second equality of \eqref{syst-tot-inf}, for every Borel set $B\subseteq \R^d$ and for every $t\in[0,T]$ it holds 
\begin{eqnarray}
\label{pr10}
\pi^1_{\#}\nuo^N_t(B) &\stackrel{\eqref{def-nuo}}=&\nuo^N_t(B\times\R^d)=\frac{1}{N}\sum_{i=1}^N \delta_{(\xo_i(t),\ro_i(t))}(B\times \R^d) \nonumber \\
&=&\frac{1}{N}\sum_{i=1}^N \delta_{\xo_i(t)}(B) \stackrel{\eqref{def-psi}}=\Psi_t^N(B).
\end{eqnarray}
Moreover, by Lemma \ref{lemma3} and applying \cite[Lemma 5.2.1]{AGS}, $\pi^1_{\#}\nuo^N_t$ narrowly converges to $\pi^1_{\#}\nuo_t$ and, by Proposition \ref{vecchio}, $\Psi_t^N$ narrowly converges to $\Psi_t$ as $N\to +\infty$. These convergences together with \eqref{pr10} imply that 
$$
\pi^1_{\#}\nuo_t=\Psi_t \quad  \text{for every }t\in [0,T].
$$
As for the third equality of \eqref{syst-tot-inf}, we have
$$
\nuo^N_T \stackrel{\eqref{def-nuo}}= \frac{1}{N}\sum_{i=1}^N\delta_{(\xo_i(T),\ro_i(T))} \stackrel{\eqref{sys-tot-N}} = \frac{1}{N}\sum_{i=1}^N\delta_{(\xo_i(T),0)} = \frac{1}{N}\sum_{i=1}^N\delta_{\xo_i(T)} \otimes  \delta_0 \stackrel{\eqref{def-psi}} = \Psi^N_T \otimes \delta_0.
$$
Since, by Lemma \ref{lemma3}, $\nuo^N_T$ narrowly converges to $\nuo_T$ and, by Proposition \ref{vecchio}, $\Psi^N_T$ narrowly converges to $\Psi_T$ as $N\to +\infty$, it follows from the previous equality that 
$$
\nuo_T=\Psi_T \otimes \delta_0.
$$
Now we prove that $\nuo$ solves the continuity equation in \eqref{syst-tot-inf} in the sense of distributions. Thanks to \eqref{eq-N} and by \eqref{def-nuo}, for every test function $\varphi\in C^{\infty}_c((0,T)\times \R^{2d})$ we have that, for every $t\in [0,T]$, 
\begin{eqnarray}
\label{pr11}
&& \int_{\R^{2d}} \varphi(t,x,r) \di\nuo^N_t(x,r) = \int_0^t \int_{\R^{2d}}\partial_t\varphi(\tau,x,r) \di\nuo^N_\tau(x,r)\di \tau \nonumber \\
&& +  \int_0^t \int_{\R^{2d}} \langle \nabla_x\varphi(\tau,x,r), v(x,\Psi_\tau^N)\rangle \di\nuo^N_\tau(x,r)\di \tau  \nonumber \\
&& + \int_0^t \int_{\R^{2d}} \langle \nabla_x\varphi(\tau,x,r), h(x,\Psi_\tau^N)\di\rhoo^N_\tau(x,r)\rangle \di \tau \nonumber \\
&& - \int_0^t \int_{\R^{2d}} \langle \nabla_r\varphi(\tau,x,r), \nabla_x^T v(x,\Psi_\tau^N)[r]\rangle \di\nuo^N_\tau \di \tau \nonumber \\
&& - \int_0^t \int_{\R^{2d}}\int_{\R^{2d}} \langle \nabla_r\varphi(\tau,x,r), \nabla^{T}_{\psi}v(\tilde x,\Psi_\tau^N)(x)[\tilde r] \rangle \di\nuo_\tau^N(\tilde x,\tilde r) \di\nuo^N_\tau(x,r)\di \tau \nonumber \\
&& + \int_0^t \int_{\R^{2d}} \langle \nabla_r\varphi(\tau,x,r), \nabla_{\psi} L(\Psi_\tau^N)(x)\rangle \di\nuo^N_\tau(x,r)\di \tau \nonumber \\
&& - \int_0^t \int_{\R^{2d}} \langle \nabla_r\varphi(\tau,x,r),\nabla_x h(x,\Psi_\tau^N)\rangle \langle r,\di\rhoo^N_\tau(x,r)\rangle \di \tau \nonumber \\
&& - \int_0^t \int_{\R^{2d}}\int_{\R^{2d}} \langle \nabla_r\varphi(\tau,x,r), \nabla_{\psi} h(\tilde x,\Psi_\tau^N)(x)\rangle\langle \tilde r, \di\rhoo_\tau^N(\tilde x,\tilde r)\rangle \di\nuo^N_\tau(x,r)\di \tau.
\end{eqnarray}
Since, by Lemma \ref{lemma3}, $\nuo^N\to\nuo$ in $C([0,T];\mathcal{P}_1(\R^{2d}))$ and $\rhoo^N\to \rhoo=\wvo(t,x)\nuo$ narrowly in $\mathcal{M}([0,T]\times \R^{2d};\R^d)$ as $N\to+\infty$, the integral on the left-hand side and the first integral on the right-hand side of \eqref{pr11} immediately pass to the limit. For all other integrals on the right-hand side of \eqref{pr11} we follow the same technique to pass to the limit as $N\to +\infty$. In light of this fact, for brevity, we deal with only the last integral on the right-hand side (which contains a double integration). To do this, we define $\rhoo_\tau:=\wvo(\tau,\cdot)\nuo_\tau \in \mathcal{M}(\R^{2d};\R^d)$ and we estimate 
\begin{eqnarray}
\label{pr12}
&& \left|\int_0^t \int_{\R^{2d}}\int_{\R^{2d}} \langle \nabla_r\varphi(\tau,x,r), \nabla_{\psi} h(\tilde x,\Psi_\tau^N)(x)\rangle\langle \tilde r, \di\rhoo_\tau^N(\tilde x,\tilde r)\rangle \di\nuo^N_\tau(x,r)\di \tau \right.  \nonumber \\
&& \left. -\int_0^t \int_{\R^{2d}}\int_{\R^{2d}} \langle \nabla_r\varphi(\tau,x,r), \nabla_{\psi} h(\tilde x,\Psi_\tau)(x)\rangle\langle \tilde r, \di\rhoo_\tau(\tilde x,\tilde r)\rangle \di\nuo_\tau(x,r)\di \tau \right| \nonumber \\
&& \leq \left|\int_0^t\int_{\R^{4d}}\langle \nabla_r\varphi(\tau,x,r),\nabla_{\psi} h(\tilde x,\Psi_\tau^N)(x)-\nabla_{\psi} h(\tilde x,\Psi_\tau)(x)\rangle \langle \tilde r, \di\rhoo_\tau^N(\tilde x,\tilde r)\rangle \di\nuo^N_\tau(x,r)\di \tau\right | \nonumber \\
&& + \left|\int_0^t\int_{\R^{4d}} \langle \nabla_r\varphi(\tau,x,r), \nabla_{\psi} h(\tilde x,\Psi_\tau)(x)\rangle\langle \tilde r, \di\rhoo_\tau^N(\tilde x,\tilde r)\di\nuo^N_\tau(x,r) -\di\rhoo_\tau(\tilde x,\tilde r) \di\nuo_\tau(x,r)\rangle \di \tau \right| \nonumber \\
&& =: I_1^N + I_2^N.
\end{eqnarray}
First we focus on $I_1^N$. We have
$$
I_1^N \leq \int_0^t |G^N(\tau)| \di \tau,
$$
where
$$
G^N(\tau):= \int_{\R^{4d}}\langle \nabla_r\varphi(\tau,x,r),\nabla_{\psi} h(\tilde x,\Psi_\tau^N)(x)-\nabla_{\psi} h(\tilde x,\Psi_\tau)(x)\rangle \langle \tilde r, \di\rhoo_\tau^N(\tilde x,\tilde r)\rangle \di\nuo^N_\tau(x,r).
$$
We recall that, by Lemma \ref{lemma1}, $\supp(\nuo^N_\tau)\stackrel{\eqref{def-rhoo}}=\supp(\rhoo^N_\tau) \subseteq B_{\mathcal{R}}(0)\subset \R^{2d}$ and
$\Psi_\tau^N \subset K_1$ where $K_1$ is a compact subset of $\mathcal{P}_c(\R^d)$, for every $N\in \N$ and $\tau\in [0,T]$. Hence, $\nabla_\psi h(\tilde x, \Psi_\tau)(x)$ is continuous on a compact subset of $\R^d\times \mathcal{P}_c(\R^d)\times \R^d$ and there exists $\omega: [0,+\infty)\to [0,+\infty]$ modulus of continuity with $\lim_{s\to 0^+}\omega(s)=0$ such that
$$
|\nabla_{\psi} h(\tilde x,\Psi_\tau^N)(x)-\nabla_{\psi} h(\tilde x,\Psi_\tau)(x)| \leq \omega\left(W_1(\Psi_\tau^N,\Psi_\tau)\right).
$$
It follows from definition of $\rhoo^N$ and $\nuo^N$ (i.e. \eqref{def-rhoo}-\eqref{def-nuo}) and since $\uo\in [-M,M]^N$ for some $M>0$, that 
$$
|G^N(\tau)|\leq \|\nabla_r\varphi\|_{L^\infty([0,T]\times \R^{2d})} \mathcal{R} M \omega\left(W_1(\Psi_\tau^N,\Psi_\tau)\right),
$$
which, using that, by Proposition \ref{vecchio}, $\Psi_\tau^N \to \Psi_\tau$ in $\mathcal{P}_1(\R^d)$, implies
$$
\lim_{N\to+\infty} G^N(\tau)=0 \qquad \text{a.e. }\tau \in [0,T].
$$
This fact, noting that $G^N(\tau)$ is uniformly bounded in $[0,T]$ and applying the Lebesgue theorem, leads to
\begin{equation}
\label{pr13}   
0\leq\lim_{N\to +\infty} I_1^N \leq \lim_{N\to +\infty} \int_0^t|G^N(\tau)| \di\tau = 0.
\end{equation}
We are left to prove that $I_2^N \to 0$ as $N\to +\infty$. We notice that, since $\Psi\in C([0,T];\mathcal{P}_1(\R^d))$ and by $\bm{(\mathrm H)}$, we have
\begin{equation*}
\langle \nabla_r\varphi(\tau,x,r), \nabla_{\psi} h(\tilde x,\Psi_\tau)(x)\rangle \tilde r \in C([0,T]\times B_{\mathcal{R}}(0)\times B_{\mathcal{R}}(0);\R^d).
\end{equation*}
Hence, using the density of the linear span of test functions of the form $\varphi(\tau,x,r,\tilde x,\tilde r)=\alpha(\tau)\theta(x,r)\beta(\tilde x,\tilde r)$ in $C([0,T]\times B_{\mathcal{R}}(0)\times B_{\mathcal{R}}(0))$ with $\alpha \in C( [0, T])$, $\theta \in C ( B_{\mathcal{R}} (0))$ and $\beta \in {\rm Lip} ( B_{\mathcal{R}} (0))$, it is enough to show that for every $\alpha\in C([0,T])$, $\theta \in C ( B_{\mathcal{R}} (0))$ and $\beta \in {\rm Lip} ( B_{\mathcal{R}} (0))$ it holds
\begin{align}
\label{pr14bis}
\lim_{N \to \infty} \int_0^t\int_{\R^{4d}} & \alpha( \tau) \beta(\tilde{x}, \tilde{r}) \theta(x, r)\di\rhoo_\tau^N(\tilde x,\tilde r)\di\nuo^N_\tau(x,r)\di \tau \nonumber \\
& =\int_0^t\int_{\R^{4d}} \alpha( \tau) \beta(\tilde{x}, \tilde{r}) \theta(x, r) \di\rhoo_\tau(\tilde x,\tilde r) \di\nuo_\tau(x,r)  \di \tau \,. 
\end{align}
By simple algebraic manipulations, we write
\begin{align}
\label{pr14tris}
\int_0^t\int_{\R^{4d}} & \alpha( \tau) \beta(\tilde{x}, \tilde{r}) \theta(x, r)\di\rhoo_\tau^N(\tilde x,\tilde r)\di\nuo^N_\tau(x,r)\di \tau 
\\
&
= \int_0^t \alpha( \tau) \int_{\R^{2d}} \bigg( \int_{\R^{2d}}  \theta(x, r) \di (\nuo_{\tau}^{N}  - \nuo_{\tau}) (x, r) \bigg) \beta(\tilde{x}, \tilde{r})\di\rhoo_\tau^N(\tilde x,\tilde r) \di \tau \nonumber
\\
&
\qquad + \int_0^t \alpha( \tau) \int_{\R^{2d}} \bigg( \int_{\R^{2d}}  \theta(x, r) \di \nuo_{\tau} (x, r) \bigg) \beta(\tilde{x}, \tilde{r})\di\rhoo_\tau^N(\tilde x,\tilde r) \di \tau \nonumber\,.
\end{align}
For the first term on the right-hand side of~\eqref{pr14tris}, by uniform convergence of $\nu^{N}_{\tau}$ to $\nu_{\tau}$ in the 1-Wasserstein distance and recalling that $K$ is compact in $\R^d$, we get for some $M>0$ that
\begin{align}
\label{pr14quarter}
\lim_{N\to \infty} \int_0^t & \alpha( \tau) \int_{\R^{2d}} \bigg( \int_{\R^{2d}}  \theta(x, r) \di (\nuo_{\tau}^{N}  - \nuo_{\tau}) (x, r) \bigg) \beta(\tilde{x}, \tilde{r})\di\rhoo_\tau^N(\tilde x,\tilde r) \di \tau 
\\
&
\leq \lim_{N\to \infty} M T {\rm Lip} (\theta)   \| \alpha\|_{L^\infty([0,T])} \|\beta\|_{L^\infty(B_{\mathcal{R}}(0))} \sup_{\tau \in [0, T]} W_{1} (\nuo_{\tau}^{N}, \nuo_{\tau})  = 0\,. \nonumber
\end{align}
Since $\nuo \in C([0, T]; \mathcal{P}_{1} (\R^{2d}))$ and $\rhoo^{N}$ converges narrow to~$\rhoo$, passing to the limit in~\eqref{pr14tris} and using \eqref{pr14quarter}, we obtain \eqref{pr14bis}. Consequently, we deduce that
\begin{equation*}
\lim_{N\to +\infty} I_2^N = 0,
\end{equation*}
which in turn implies, together with \eqref{pr12} and \eqref{pr13}, that 
\begin{eqnarray*}
&&\lim_{N\to +\infty} \int_0^t \int_{\R^{2d}}\int_{\R^{2d}} \langle \nabla_r\varphi(\tau,x,r), \nabla_{\psi} h(\tilde x,\Psi_\tau^N)(x)\rangle\langle \tilde r, \di\rhoo_\tau^N(\tilde x,\tilde r)\rangle \di\nuo^N_\tau(x,r)\di \tau \\
&& = \int_0^t \int_{\R^{2d}}\int_{\R^{2d}} \langle \nabla_r\varphi(\tau,x,r), \nabla_{\psi} h(\tilde x,\Psi_\tau)(x)\rangle\langle \tilde r, \di\rhoo_\tau(\tilde x,\tilde r)\rangle \di\nuo_\tau(x,r)\di \tau.
\end{eqnarray*}
Repeating the same argument for the other integrals on the right-hand side of \eqref{pr11}, we conclude that $\nuo$ is a distributional solution to \eqref{syst-tot-inf}. \\
Finally, in order to get the maximality condition, we start by taking $\omega\in \mathrm{Lip}(\R^d; K)$. We define $\uv^N$ with components $u_i(t):=\omega(\xo_i(t))$ for $i=1,\dots,N$ and $t\in [0,T]$. Applying Proposition \ref{vecchio2} in the inequality below and using the definition of $\rhoo^N$ and $\nuo^N$ (see \eqref{def-rhoo} and \eqref{def-nuo} respectively), we obtain for any $[t_1,t_2]\subseteq [0,T]$
\begin{eqnarray}
\label{pr15}
&& \int_{t_1}^{t_2} \mathcal{H}_N(\xo^N(t),\ro^N(t),\uo^N(t)) \di t  \geq \int_{t_1}^{t_2} \mathcal{H}_N(\xo^N(t),\ro^N(t),\uv^N(t))\di t  \nonumber
 \\
 &&
 \stackrel{\eqref{ham-inf},\eqref{ham-N}}= \int_{t_1}^{t_2}\mathcal{H}\left(\nuo^N_t,\omega\right) \di t.
\end{eqnarray} 
We want to pass to the limit in \eqref{pr15} as $N\to +\infty$. First we focus on the right-hand side. By definition of $\mathcal{H}$ (see \eqref{ham-inf}), since $\nuo^N$ converges to $\nuo$ in $C([0,T];\mathcal{P}_1(\R^{2d}))$ and $\supp(\nuo^N)\subseteq [0,T]\times B_{\mathcal{R}}(0)$ (see Lemma \ref{lemma1}), using the continuity assumption on $L$ and the fact that $\phi(\omega)\in C_b(\R^d)$ it follows that
\begin{equation}
\label{pr17}
\lim_{N\to+\infty} \int_{t_1}^{t_2} L(\pi^1_{\#}\nuo_t^N) \di t= \int_{t_1}^{t_2} L(\pi^1_{\#}\nuo_t) \di t 
\end{equation}
and
\begin{equation}
\label{pr18}
\lim_{N\to +\infty}\int_{t_1}^{t_2} \int_{\R^{2d}} \phi(\omega(x))\di\nuo_t^N(x,r)\di t = \int_{t_1}^{t_2} \int_{\R^{2d}} \phi(\omega(x))\di\nuo_t(x,r)\di t.
\end{equation}
Moreover, arguing as done to estimate \eqref{pr12} (which is possible since $\supp(\nuo)$ is compact and $\omega$ is continuous) we deduce
\begin{eqnarray}
\label{pr16}
&&\lim_{N\to +\infty} \int_{t_1}^{t_2}\int_{\R^{2d}} \langle r, v(x,\pi_{\#}^1\nuo^N_t) + h(x,\pi_{\#}^1\nuo_t^N)\omega(x)\rangle \di \nuo^N_t(x,r)\di t \nonumber \\
&&=\int_{t_1}^{t_2}\int_{\R^{2d}} \langle r, v(x,\pi_{\#}^1\nuo_t) + h(x,\pi_{\#}^1\nuo_t)\omega(x)\rangle \di \nuo_t(x,r)\di t. 
\end{eqnarray}
In the same way, leveraging on the narrow convergence of $\nuo^N$ to $\nuo$ and of $\rhoo^N$ to $\wvo\nuo$ given by Lemma \ref{lemma3}, we have
\begin{eqnarray}
\label{pr19}
&&\lim_{N\to +\infty} \int_{t_1}^{t_2} \bigg(\int_{\R^{2d}} \langle r, v(x,\pi_{\#}^1\nuo^N_t)\rangle \di \nuo^N_t(x,r) + \langle r, h(x,\pi_{\#}^1\nuo_t^N)\di\rhoo_t^N(x,r) \rangle\bigg)\di t \nonumber \\
&&=\int_{t_1}^{t_2}\int_{\R^{2d}} \langle r, v(x,\pi_{\#}^1\nuo_t) + h(x,\pi_{\#}^1\nuo_t)\wvo(t,x)\rangle \di \nuo_t(x,r)\di t. 
\end{eqnarray}
Now, recalling the definition of $\mathcal{H}$ and of $\mathcal{H}_{N}$ (see \eqref{ham-inf} and \eqref{ham-N}), combining \eqref{pr16}, \eqref{pr17} and \eqref{pr18} for the last term on right-hand side of \eqref{pr15} and using \eqref{pr19}, \eqref{pr17} and \eqref{gamma-conv3} for the first term on the left-hand side of \eqref{pr15}, we infer that
\begin{align*}
\int_{t_1}^{t_2} \mathcal{H}\left(\nuo_t,\wvo(t,\cdot)\right) \di t & = \lim_{N\to +\infty}  \int_{t_1}^{t_2} \mathcal{H}_N(\xo^N(t),\ro^N(t),\uo^N(t)) \di t \nonumber \\
&\geq \lim_{N\to +\infty} \int_{t_1}^{t_2} \mathcal{H}\left(\nuo_t^N,\omega\right) \di t = \int_{t_1}^{t_2} \mathcal{H}\left(\nuo_t,\omega\right) \di t,
\end{align*}
for every $\omega\in \mathrm{Lip}(\R^d;K)$ and for every $t_1,t_2 \in [0,T]$. By arbitrariness of $t_1$ and $t_2$ in $[0,T]$ and applying the Lebesgue differentiation theorem we conclude that for every $\omega\in \mathrm{Lip}(\R^d;K)$
\begin{equation}
\label{pr20}
\mathcal{H}\left(\nuo_t,\wvo(t,\cdot)\right) \geq \mathcal{H}\left(\nuo_t,\omega\right) \qquad \text{for a.e. }t\in [0,T].
\end{equation}
Note that, by \eqref{ham-inf} and since $\nuo\in C([0,T];\mathcal{P}_1(\R^{2d}))$, every $t\in [0,T]$ is a Lebesgue point of $\mathcal{H}(\nuo_t,\omega)$. Therefore \eqref{pr20} holds at every Lebesgue point of $\mathcal{H}(\nuo_t,\wvo(t,\cdot))$, in particular such points depend only on $\wvo$. \\
Finally, by density of $\mathrm{Lip}(\R^d;K)$ in $L^1_{\pi^1_{\#}\nuo_t}(\R^d;K)$ and recalling that $K$ is compact, we deduce that \eqref{pr20} holds for every $\omega\in L^1_{\pi^1_{\#}\nuo_t}(\R^d;K)$, which in turn implies \eqref{cond-inf}.
\end{proof}

\section{Generalization to optimal control problems with convex state space}
\label{s:convexcase}

In this section we briefly discuss how to generalize the setting presented above in~$\R^{d}$ to the convex constrained framework introduced in~\cite{AFMS, MS2020} for modelling multi-agent multi-label systems. In this case, the state variable is a pair $(x, \lambda) \in C := \mathbb{R}^{d} \times \mathcal{P} (U)$, where~$\mathcal{P}(U)$ is the space of probability measure on a finite set of pure strategies $U$. The agents' state is therefore described by their position~$x$ and by their mixed strategy~$\lambda$.  We denote by $c = (x, \lambda)$ the generic element of~$C$.  In the following two examples, inspired by~\cite[Section~5]{ADS} and~\cite{ADEMS, MS2020}, we point out how the analysis performed above can be adapted by replacing usual gradients in~$\R^{d}$ with the notion of $C$-differentiability, reported here.

\begin{definition}
\label{d:cdiff}
Let $(E, \| \cdot\|_{E})$ and $(F , \| \cdot \|_{F})$ be two normed spaces, $C \subseteq E$ a closed convex subset of~$E$, and $f \colon C \to F$. We say that $f$ is~$C$-differentiable in $c \in C$ if there exists a linear operator ${\rm D}_{c} f \in \mathcal{L} (E_{C}; F)$ such that
\begin{displaymath}
\lim_{C\ni c' \to c}\, \frac{f(c') - f(c) - {\rm D}_{c} f [c' - c]}{\| c' - c\|_{E}} = 0\,.
\end{displaymath}
\end{definition}

\subsection{Control of multi-population systems}
\label{s:example-1}

In multi-population systems, the space dynamics of each agent is coupled with a transition process for the label $\lambda$, describing how agents may switch the population they belong to. Such process is modelled by means of reversible Markov chains on $n$ states (cf.~\cite{Maas, Mielke}). \\
For simplicity, we identify the set of labels $U$ with the canonical base of~$\R^{n}$, i.e.,  $U = \{ e_{1}, \ldots, e_{n}\}$, and endow $U$ with the distance
\begin{equation}
\label{e:metric-examples}
0= d_{U} (e_{i}, e_{i}) \quad \text{for $i = 1, \ldots, n$}, \qquad  1=d_{U}(e_{i} , e_{j}) \quad \text{for $i \neq j$}.
\end{equation}
The space of probability measures $\cP(U)$ is identified with the closed $(n-1)$-simplex 
\begin{align*}
\Lambda_{n}:= \bigg\{ \lambda = (\lambda_{1}, \ldots, \lambda_{n}) \in \R^{n}: \lambda_{i} \geq 0\,, \ \sum_{i=1}^{n} \lambda_{i} = 1 \bigg\}\,.
\end{align*}
The state space is represented by the convex subset $C= \R^{d} \times \mathcal{P}(U) \sim \R^{d} \times \Lambda_{n}$ of $E= \R^{d} \times \mathcal{F} (U)$, where $\mathcal{F}(U)$ is the {\em Aerens-Eelles} space (see~\cite[Section~2.1]{AFMS} and \cite{AP}).  We notice that
\begin{displaymath}
E_{C} = \overline{ \R(C - C)} =\R^{d} \times  \{ \mu \in \mathcal{M} (U): \, \mu(U) = 0\}\,.
\end{displaymath}
Since $\mathcal{P} (U)$ is identified with~$\Lambda_{n}$,~$E_{C}$ may be represented by $\R^{d} \times \R^{n}_{0}$, where
\begin{displaymath}
\R^{n}_{0} := \bigg\{ \xi \in \R^{n}: \, \sum_{i=1}^{n} \xi_{i} = 0 \bigg\}\,.
\end{displaymath}
In particular, we notice that $E$ is a finite dimensional space, and thus a separable, reflexive, and locally compact Banach space. 

Given a compact, convex set of admissible controls~$K\subseteq \R^{d}$ with $0 \in K$, we consider $\K:=L^1([0,T];K)$ and set up the control problem
\begin{equation}
\label{costoN-ex}
\min_{\bm u^N\in\K^N}\left\{ \int_0^T L(\psi_t^N) \di t + \int_0^T \frac{1}{N}\sum_{i=1}^N \phi(u_i(t))\di t \right\}
\end{equation}
where $\psi^{N}_{t} := \frac{1}{N} \sum_{i=1}^{N} \delta_{(x_{i} (t) , \lambda_{i} (t))}$ and $c_{i}= (x_{i}, \lambda_{i})$ satisfies
\begin{equation}
\label{systN-ex}
\begin{cases}
\dis \dd x_i(t)= v(c_i(t),\psi_t^N) + h(c_i(t),\psi_t^N)u_i(t) & \text{ in }(0,T],  \\[2mm]
\dis \dd \lambda_{i} (t) = \mathcal{T} (c_{i} (t) , \psi^{N}_{t}) &  \text{ in }(0,T],\\[2mm]
x_i (0)=\xo_{0,i}^N, \qquad \lambda_{i} (0) = \boldsymbol{\lambda}_{0, i}^{N}
\end{cases}
\quad\text{for }i=1,\dots, N,
\end{equation}
where $\mathcal{T}(c, \psi) := \mathcal{Q} (x, \psi) \lambda$ for a matrix-valued map $\mathcal{Q}\colon \R^{d} \times \mathcal{P}_{1} (C)  \to \R^{n \times n}$ satisfying the following:
\begin{itemize}
\item [$(\mathcal{Q}_{0})$] for every $(x, \psi) \in \R^{d} \times \mathcal{P}_{1}(C)$ and every $i, j = 1, \ldots, n$, $\mathcal{Q}_{ij}(x, \psi) \geq 0$ for~$i \neq j$, and $\mathcal{Q}_{ii}(x, \psi) = - \sum_{j \neq i} \mathcal{Q}_{j i} (x, \psi)$; 

\item[$(\mathcal{Q}_{1})$] for every $(x, \psi) \in \R^{d} \times \mathcal{P}_{1}(C)$, $\mathcal{Q} ( x, \psi)$ is \emph{reversible}, that is, there exists a unique $\sigma= \sigma (x, \psi) \in \Lambda_{n}$ such that
\begin{displaymath}
\mathcal{Q}_{ij} ( x, \psi ) \sigma_{j} = \mathcal{Q}_{ji} (x, \psi) \sigma_{i} \qquad \text{for every $i, j = 1, \ldots, n$}\,,
\end{displaymath}
%
\end{itemize}
together with local Lipschitz continuity, linear growth and differentiability conditions similar to {\bf (Hv)} and {\bf (Hh)}. We refer to \cite[Section~5.1]{ADS} for explicit examples of the fields $v_{\psi}$,~$h$, $\mathcal{Q}$, $L$, and~$\phi$ above, together with a discussion concerning their $C$-differentiability and Wasserstein differentials in the case $n=2$, which can be easily extended to any $n \geq 2$. In particular, continuity of $C$- and Wasserstein differentials is discussed, which is part of the assumptions~{\bf (Hv)}.

Existence of optimal controls for~\eqref{costoN-ex}--\eqref{systN-ex} has been studied in~\cite{AAMS}, together with the variational convergence for a diverging number of particles $N$ to the following mean-field optimal control problem 
\begin{equation}
\label{control-convex-sec5}
\min_{\omega\in L^1_{\psi}([0,T]\times C ; K)}\left\{ \int_0^T L(\psi_t) \di t + \int_0^T \int_{\R^d} \phi(\omega(t,c))\di \psi_t(c) \di t \right\}
\end{equation}
subjected to
\begin{equation}
\label{control-convex-sec52}
\begin{cases}
\dis \dd \psi_t = -\mathrm{div} \Big ( (v(c,\psi_t), \mathcal{T} (x, \psi_{t}) ) \psi  + ( h(c,\psi_t)\omega(t,c)), 0) \psi_t \Big) & \text{ in }(0,T],  \\
\psi_0=\hat{\Psi}_0\,,
\end{cases}
\end{equation}
for $\hat{\Psi}_{0} \in \mathcal{P}_{c} (C)$ limit of $\psi^{N}_{0}$ in the $1$-Wasserstein distance. We refer to~\cite[Theorem~2 and Corollary~1]{AAMS} for the precise statement, in the spirit of Proposition~\ref{vecchio}. Optimality conditions in Pontryagin form for~\eqref{costoN-ex}--\eqref{systN-ex} in the case of smooth optimal controls have been obtained in~\cite[Theorem~3.5]{ADS}, relying on the notion of $C$-differentiability of Definition~\ref{d:cdiff}. Hinging on the finite dimensional nature of the state space~$C \subseteq E$, the results contained in Theorem~\ref{mainres} can be repeated verbatim for the control problem~\eqref{control-convex-sec5}--\eqref{control-convex-sec52}, replacing space gradients with~$C$-differentials (cf.~Definition~\ref{d:cdiff}). Notice that the curve ${\bm \nu}$ belongs to $\mathrm{Lip}([0, T]; \mathcal{P}_1( C \times E_{C}^{*}))$ and ${\bm \rho} \in \mathcal{M} ([0,  T]\times C \times E^{*}_{C}; \R^{d})$ in this setting, where $E^{*}_{C}$ denotes the dual space to~$E_{C}$.

\subsection{Entropy regularized replicator dynamics}
\label{e:example-2}

The second class of examples we consider in multi-label systems is that entropy regularised replicator dynamics, inspired by~\cite{ADEMS, BFS} and~\cite[Section~5.2]{ADS} (see also \cite{TrClHh,ChSo,MoOr}).  As in Section~\ref{s:example-1}, we consider the set of labels $U=\{e_{1}, \ldots, e_{n}\} \subseteq \R^{n}$ endowed with the metric~\eqref{e:metric-examples}. We fix a probability measure $\eta \in \mathcal{P}(U)$ with $\supp(\eta) = U$, and $p \in (1, +\infty)$, and define $E := \R^{d} \times L^{p}_{\eta} (U)$, where
\begin{displaymath}
L^{p}_{\eta} (U) := \bigg\{ \lambda \colon U \to \R: \, \int_{U} | \lambda(u)|^{p} \, \di \eta(u) <+\infty \bigg\}.
\end{displaymath}
The space~$E$ is endowed with the norm $\| \cdot\|_{E} = | \cdot| + \| \cdot\|_{p}$, where $\| \cdot\|_{p}$ denotes the $L^{p}$-norm of $L^{p}_{\eta}(U)$. Since $U$ is finite, $E$ is a finite dimensional Banach space, and thus separable, reflexive, and locally compact. We further remark that, being $\supp(\eta) = U$, $\eta$ is a sum of deltas and $\| \cdot\|_{p}$ is a weighted version of the standard $p$-norm of~$\R^{n}$. 

For $0 < r < R <+\infty$ we set
\begin{displaymath}
C_{r, R} := \R^{d} \times \bigg\{ \lambda \in L^{p}_{\eta} (U): \, r \leq \lambda (u) \leq R \ \text{for $\eta$-a.e.~$u \in U$} \, \text{and} \, \int_U|\lambda(u)|^p\di \eta(u) = 1 \bigg\}.
\end{displaymath}
In particular, $C_{r, R}$ is a convex and closed subset of~$E$. We denote by $c =(x, \lambda)$ the elements of~$C_{r, R}$ and consider the set of controls $\K:=L^1([0,T];K)$ as in Section~\ref{s:example-1}. For every $N \in \mathbb{N}$, let us consider the finite particle control problem~\eqref{costoN-ex}--\eqref{systN-ex}, where $\mathcal{T}(c, \psi) := \mathcal{S} (c, \psi) + \varepsilon \mathcal{R}(\lambda)$ for $\varepsilon >0$, where for every $\psi \in \mathcal{P} (C_{r, R})$ we have set
\begin{align*}
&
 \mathcal{S} (c, \psi) := \bigg( \int_{C_{r,R}} J(x, \cdot , x') \, \di \psi(x', \lambda') - \int_{U} \int_{C_{r,R}}  J(x, u' , x') \, \lambda(u')\, \di \psi(x', \lambda')  \, \di \eta(u')\bigg) \lambda, \\
 & \mathcal{R} (\lambda) := \bigg( \int_{U} \lambda(u)\,\log(\lambda (u)) \,\de\eta(u) - \log (\lambda) \bigg) \lambda\,.
\end{align*}
for a Lipschitz continuous payoff function $J \colon \R^{d} \times U \times \R^{d} \to \R$. Well-posedness of~\eqref{systN-ex} in $C_{r, R}$ for a given set of controls~${\bm u^N}$ is contained in~\cite{ADEMS}. The $C$- and Wasserstein differentiability of~$\mathcal{S}$ and~$\mathcal{R}$ have been discussed in~\cite[Section~5.2]{ADS} under differentiability assumptions on the payoff function~$J$ with respect to $x$ and $x'$. The continuity of such differentials follows from the continuity of $\nabla_{x} J$ and $\nabla_{x'} J$. Arguing as in Proposition~\ref{vecchio}, also in this case we may recover the mean-field optimal control problem~\eqref{control-convex-sec5}--\eqref{control-convex-sec52} as variational limit of~\eqref{costoN-ex}--\eqref{systN-ex}. Finally, the optimality conditions and the convergences discussed in Theorem~\ref{mainres} can be deduced, still relying on the local compactness of~$E$ and~$E^{*}_{C}$ (recall they are both finite dimensional spaces).

\section*{Acknowledgements}
The work of S. Almi was funded by the FWF Austrian Science Fund through the Project 10.55776/P35359 and by the University of Naples Federico II through FRA Project "ReSinApas".
\par
R. Durastanti has been supported by the Italian Ministry of University and Research under PON ``Ricerca e Innovazione'' 2014-2020 (PON R\&I, D.M. 1062/21) - AZIONE IV.6 ``Contratti di Ricerca su tematiche Green'' CUP E65F21003200003, and, his work has been carried out in collaboration with CRdC Tecnologie Scarl as part of the "Embodied Social Experiences in Hybrid Shared Spaces (SHARESPACE)" project - http://sharespa\\ce.eu funded by the European Union under Horizon Europe, grant number 101092889.
\par
The work of R. Durastanti and F. Solombrino has been also supported by Gruppo Nazionale per l'Analisi Matematica, la Probabilit\`a e le loro Applicazioni (GNAMPA-INdAM, Project 2024 ``Problemi di controllo ottimo nello spazio di Wasserstein delle misure definite su spazi di Banach'', CUP E53C23001670001). The work of S.~Almi is supported by  Gruppo Nazionale per l'Analisi Matematica, la Probabilit\`a e le loro Applicazioni (GNAMPA-INdAM, Project 2025: DISCOVERIES - Difetti e Interfacce in Sistemi Continui: un'Ottica Variazionale in Elasticit\`a con Risultati Innovativi ed Efficaci Sviluppi).
\par
The work of F. Solombrino and S. Almi is part of the MUR - PRIN 2022, project Variational Analysis of Complex Systems in Materials Science, Physics and
Biology, No. 2022HKBF5C, funded by European Union NextGenerationEU.


\begin{thebibliography}{99}

\bibitem{AAMS}
{\sc G. Albi, S. Almi, M. Morandotti, and F. Solombrino}, {\em Mean-field selective optimal control via transient leadership}, Appl.~Math.~Optim., 85
(2022), Paper No. 9.
\\
\bibitem{ADS}
{\sc S. Almi, R. Durastanti and F. Solombrino}, {\em A Pontryagin Maximum Principle for agent-based models with convex state space}, ESAIM: Control Optim. Calc. Var., doi.org/10.1051/cocv/2025025.
\\
\bibitem{ADEMS}
{\sc S.~Almi, C.~D'Eramo, M.~Morandotti, and F.~Solombrino}, {\em Mean-field limits for entropic multi-population dynamical systems}, Milan J. Math, 91 (2023), pp.~175--212.
\\
\bibitem{Mor1}
{\sc S.~Almi, M.~Morandotti, and F.~Solombrino}, {\em A multi-step {L}agrangian scheme for spatially inhomogeneous evolutionary games}, J. Evol. Equ., 21 (2021), pp.~2691--2733.
\\
\bibitem{AFMS}
{\sc L. Ambrosio, M. Fornasier, M. Morandotti, and F. Savar\'e}, {\em Spatially inhomogeneous evolutionary games}, Comm. Pure Appl. Math. 74 (2021), pp.~1353--1402.
\\
\bibitem{AGS}
{\sc L. Ambrosio, N. Gigli, and G. Savar\'e}, {\em Gradient flows in metric spaces and in the space of probability measures}, 2nd edn. Lectures in Mathematics ETH Z\"urich. Birkh\"auser Verlag, Basel (2008).
\\
\bibitem{Amb-Tre}
{\sc L.~Ambrosio and D.~Trevisan}, {\em Well-posedness of {L}agrangian flows and continuity equations in metric measure spaces}, Anal.~PDE, 7 (2014), pp.~1179--1234.
\\
\bibitem{AP}
{\sc L.~Ambrosio and D.~Puglisi}, {\em Linear extension operators between spaces of Lipschitz maps and optimal transport}, J.~Reine Angew.~Math., 764 (2020), pp.~1--21.
\\
\bibitem{BPLL}
{\sc C. Bertucci and P.-L. Lions}, {\em An approximation of the squared Wasserstein distance and an application to Hamilton-Jacobi equations}, arXiv:2409.11793, (2024).
\\
\bibitem{BFS}
{\sc M.~Bonafini, M.~Fornasier, and B.~Schmitzer}, {\em Data-driven entropic spatially inhomogeneous evolutionary games}, European J. Appl. Math., 34 (2023), pp.~106--159.
\\
\bibitem{BFRS}
{\sc M. Bongini, M. Fornasier, F. Rossi, and F. Solombrino}, {\em Mean field Pontryagin Maximum Principle}, J.~Optim.~Theory Appl., 175 (2017), no. 1, pp.~1--38.
\\
\bibitem{PMPWassConst}
{\sc B.~Bonnet}, {\em A Pontryagin Maximum Principle in Wasserstein spaces for constrained optimal control problems}, ESAIM: Control Optim.~Calc.~Var., 25(52) (2019), article n.~52.
\\
\bibitem{BonFor}
{\sc B.~Bonnet, C.~Cipriani, M.~Fornasier, and H.~Huang}, {\em A measure theoretical approach to the mean-field maximum principle for training NeurODEs}, Nonlinear Anal., 227 (2023), Paper No.~113161.
\\
\bibitem{BonFra}
{\sc B.~Bonnet and H.~Frankowska}, {\em Necessary optimality conditions for optimal control problems in Wasserstein spaces}, Appl.~Math.~Optim. 84 (2021), pp.~1281--1330.
\\
\bibitem{LipReg}
{\sc B.~Bonnet and F.~Rossi}, {\em Intrinsic Lipschitz regularity of mean-field optimal controls}, SIAM J.~Control Optim., 59 (2021), pp.~2011--2046.
\\
\bibitem{BonRos}
{\sc B. Bonnet and F. Rossi}, {\em The Pontryagin Maximum Principle in the Wasserstein space}, Calc.~Var.~Partial Differential Equations, 58 (2019), Paper No.~11.
\\
\bibitem{Bressan}
{\sc A.~Bressan and B.~Piccoli}, {\em Introduction to the Mathematical Theory of Control}, AIMS on Applied Math.~Vol.~2, 2007.
\\
\bibitem{Burger1}
{\sc M.~Burger, R.~Pinnau, C.~Totzeck, and O.~Tse}, {\em Mean-field optimal control and optimality conditions in the space of probability measures}, SIAM J. Control Optim., 59 (2021), pp.~977--1006.
\\
\bibitem{Burger2}
{\sc M.~Burger, R.~Pinnau, C.~Totzeck, O.~Tse, and A.~Roth}, {\em Instantaneous control of interacting particle systems in the mean-field limit}, J. Comput. Phys., 405 (2020), p.~109181.
\\
\bibitem{CLOS}
{\sc G.~Cavagnari, S.~Lisini, C.~Orrieri, and G.~Savar\'{e}}, {\em Lagrangian, {E}ulerian and {K}antorovich formulations of multi-agent optimal control problems: equivalence and gamma-convergence}, J.~Differential Equations, 322 (2022), pp.~268--364.
\\
\bibitem{ChSo}
{\sc F.A.C.C. Chalub, and M.O. Souza}, {\em The frequency-dependent Wright-Fisher model:
diffusive and non-diffusive approximations}, J. Math. Biol., 68 (2014), pp. 1089--1133.
\\
\bibitem{During}
{\sc B.~D\"uring, P.~Markowich, J.-F.~Pietschmann, and M.-T.~Wolfram}, {\em Boltzmann and Fokker-Planck equations modelling opinion formation in the presence of strong leaders}, Proc.~R.~Soc.~Lond.~Ser.~A Math.~Phys.~Eng.~Sci., 465 (2009), pp.~3687--3708.
\\
\bibitem{FLOS}
{\sc M.~Fornasier, S.~Lisini, C.~Orrieri, and G.~Savar\'{e}}, {\em Mean-field optimal control as gamma-limit of finite agent controls}, European J. Appl. Math., 30 (2019), pp.~1153--1186.
\\
\bibitem{Lisini}
{\sc S.~Lisini}, {\em Characterization of absolutely continuous curves in Wasserstein spaces}, Calc.~Var.~Partial Differential Equations, 28 (2007), pp. 85--120.
\\
\bibitem{Maas}
{\sc J.~Maas}, {\em Gradient flows of the entropy for finite {M}arkov chains}, J.~Funct.~Anal., 261 (2011), pp.~2250--2292.
\\
\bibitem{Mielke}
{\sc A.~Mielke}, {\em Geodesic convexity of the relative entropy in reversible {M}arkov chains}, Calc.~Var.~Partial Differential Equations, 48 (2013), pp.~1--31.
\\
\bibitem{MoOr}
{\sc M. Morandotti, and G. Orlando}, {\em Replicator dynamics as the large population limit of a discrete Moran process in the weak selection regime: a proof via Eulerian specification}, arXiv:2501.12688 (2025).
\\
\bibitem{MS2020}
{\sc M.~Morandotti and F.~Solombrino}, {\em Mean-field analysis of multipopulation dynamics with label switching}, SIAM J. Math. Anal., 52 (2020), pp.~1427--1462.
\\
\bibitem{Smirnov}
{\sc S.K. Smirnov}, {\em Decomposition of solenoidal vector charges into elementary solenoids, and the structure of normal one-dimensional flows}, Algebra i Analiz, 5 (1993), pp.~206--238.
\\
\bibitem{Toscani}
{\sc G.~Toscani}, {\em Kinetic models of opinion formation}, Commun.~Math.~Sci., 4 (2006), pp.~481--496.
\\
\bibitem{TrClHh}
{\sc A. Traulsen, J.C. Claussen, and C. Hauert}, {\em Coevolutionary dynamics in large, but finite populations}, Physical Review E, 74 (2006), p. 011901.

\end{thebibliography}
\end{document}